\DeclareSymbolFontAlphabet{\mathbb}{AMSb}
\DeclareSymbolFontAlphabet{\mathbbol}{bbold}
\theoremstyle{plain}
\newtheorem{theorem}{\normalfont\scshape Theorem}[section]
\newtheorem{proposition}{\normalfont\scshape Proposition}[section]
\newtheorem{lemma}[proposition]{\normalfont\scshape Lemma}
\newtheorem*{corollary*}{\normalfont\scshape Corollary}
\newtheorem*{remark*}{\normalfont\scshape Remark}
\theoremstyle{remark}
\newtheorem*{notation}{\normalfont\scshape Notation}
\numberwithin{equation}{section}
\renewcommand{\footnoterule}{
  \kern -3pt
  \hrule width 2.5in height 0.4pt
  \kern 3pt
}
\begin{document}
	
\title[Distribution of $\alpha p^2$ modulo one in the intersection of two P--S sets]
	  {On the distribution of $\alpha p^2$ modulo one in the intersection of two Piatetski--Shapiro sets}

\author[Junyi Chu, Jinjiang Li, Min Zhang]
       {Junyi Chu \quad \& \quad Jinjiang Li \quad \& \quad Min Zhang}

\address{[Junyi Chu] School of Applied Science, Beijing Information Science and Technology University,
		 Beijing 100192, People's Republic of China}

\email{junyi.chu.math@gmail.com}

\address{[Jinjiang Li] (Corresponding author) Department of Mathematics, China University of Mining and Technology,
         Beijing 100083, People's Republic of China}

\email{jinjiang.li.math@gmail.com}

\address{[Min Zhang] School of Applied Science, Beijing Information Science and Technology University,
         Beijing 100192, People's Republic of China}

\email{min.zhang.math@gmail.com}

\date{}

\footnotetext[1]{Jinjiang Li is the corresponding author. \\
  \quad\,\,
{\textbf{Keywords}}: Distribution modulo one; Piatetski--Shapiro prime; exponential sums  \\

\quad\,\,
{\textbf{MR(2020) Subject Classification}}: 11J71, 11N05, 11N80, 11L07, 11L20

}

\begin{abstract}
Let $\lfloor t\rfloor$ denote the integer part of $t\in\mathbb{R}$ and $\|x\|$ the distance from $x$ to the nearest integer. Suppose that $1/2<\gamma_2<\gamma_1<1$ are two fixed constants. In this paper, it is proved  that, whenever $\alpha$ is an irrational number and $\beta$ is any real number, there exist infinitely many prime numbers $p$ in the intersection of two Piatetski--Shapiro sets, i.e., $p=\lfloor n_1^{1/\gamma_1}\rfloor=\lfloor n_2^{1/\gamma_2}\rfloor$, such that
\begin{equation*}
\|\alpha p^2+\beta\|<p^{-\frac{14(\gamma_1+\gamma_2)-27}{43}+\varepsilon},
\end{equation*}
provided that $27/14<\gamma_1+\gamma_2<2$. This result constitutes an generalization upon the previous result of Dimitrov \cite{Dimitrov-2025}.
\end{abstract}

\maketitle

\section{Introduction and Main Result}
Let $\alpha$ be irrational number, $\beta$ be real and let $\|x\|$ denote the distance from $x$ to the nearest integer. In 1947, Vinogradov \cite{Vinogradov-1947} proved that, for $\theta=1/5-\varepsilon$, there exist infinitely many primes $p$ such that
\begin{equation}\label{upper-bound-ap+b}
\|\alpha p+\beta\|<p^{-\theta}.
\end{equation}
Subsequently, in 1977, Vaughan \cite{Vaughan-1977} obtained $\theta=1/4$ with an additional factor $(\log p)^8$ on the right--hand side of (\ref{upper-bound-ap+b}). In 1983, Harman \cite{Harman-1983} introduced sieve method into this problem and proved that $\theta=3/10$. Jia \cite{Jia-1993} later improved the exponent to $\theta=4/13$. In 1996, Harman \cite{Harman-1996} made further innovations with the sieve method and obtained $\theta=7/22$. In 2000, Jia \cite{Jia-2000} developed the techniques in Harman \cite{Harman-1996} to get $\theta=9/28$. In 2002, Heath--Brown and Jia \cite{Heath-Brown-Jia-2002} evaluated the asymptotic properties of bilinear sums of this problem and established that $\theta=16/49-\varepsilon$ is admissible. The hitherto best result in this direction is due to Matom\"{a}ki \cite{Matomaki-2009} with $\theta=1/3-\varepsilon$ and $\beta=0$. For the nonlinear case,
many mathematicians investigate the distribution of $\alpha p^k$ modulo one, i.e.,
\begin{equation}\label{upper-bound-ap^k+b}
\|\alpha p^k+\beta\|<p^{-\theta_k+\varepsilon}.
\end{equation}
In 1981, Ghosh \cite{Ghosh-1981} firstly consider the quadratic case, and showed that $\theta_2=1/8$ is admissible.
Subsequently, in 1991, Baker and Harman \cite{Baker-Harman-1991} sharpened and generalized the result of Ghosh \cite{Ghosh-1981}, who demonstrated that
\begin{equation}
 \theta_k=
 \begin{cases}
 \quad 3/20, & \textrm{if $k=2$},\\
 (3\times2^{k-1})^{-1}, & \textrm{if $k\geqslant3$}.
 \end{cases}
\end{equation}

Let $\gamma\in(\frac{1}{2},1)$ be a fixed real number. The Piatetski--Shapiro sequences are sequences of the form
\begin{equation*}
 \mathscr{N}_{\gamma}:=\big\{\lfloor n^{1/\gamma}\rfloor:\,n\in \mathbb{N}^+\big\}.
\end{equation*}
Such sequences have been named in honor of Piatetski--Shapiro, who \cite{Piatetski-Shapiro-1953}, in
1953, proved that $\mathscr{N}_{\gamma}$ contains infinitely many primes provided that $\gamma\in(\frac{11}{12},1)$. The prime numbers of the form $p=\lfloor n^{1/\gamma}\rfloor$ are called \textit{Piatetski--Shapiro primes of type $\gamma$}. More precisely, for such $\gamma$ Piatetski--Shapiro \cite{Piatetski-Shapiro-1953} showed that the counting function
\begin{equation*}
 \pi_\gamma(x):=\#\big\{\textrm{prime}\,\, p\leqslant x:\,p=\lfloor n^{1/\gamma}\rfloor\,\,\textrm{for some}\,\,
 n\in\mathbb{N}^+ \big\}
\end{equation*}
satisfies the asymptotic property
\begin{equation*}
\pi_{\gamma}(x)=\frac{x^{\gamma}}{\log x}(1+o(1))
\end{equation*}
as $x\to\infty$. Since then, the range for $\gamma$ of the above asymptotic formula in which it is known that $\mathscr{N}_{\gamma}$ contains infinitely many primes has been enlarged many times (see the literatures \cite{Kolesnik-1967,Leitmann-1975,Leitmann-1980,Heath-Brown-1983,Kolesnik-1985,Liu-Rivat-1992,
Rivat-1992,Rivat-Sargos-2001}) over the years and is currently known to hold for all $\gamma\in(\frac{2426}{2817},1)$ thanks to Rivat and Sargos \cite{Rivat-Sargos-2001}. Rivat and Wu \cite{Rivat-Wu-2001} also showed that there exist infinitely many Piatetski--Shapiro primes for $\gamma\in(\frac{205}{243},1)$ by showing a lower bound of $\pi_\gamma(x)$ with the expected order of magnitude. We remark that if $\gamma>1$ then $\mathscr{N}_\gamma$ contains all natural numbers, and hence all primes, particularly.

In 1982, Leitmann \cite{Leitmann-1982}, who investigated the prime number theorem in the intersection of two Piatetski--Shapiro sets, proved that $\mathscr{N}_{\gamma_1}\cap\mathscr{N}_{\gamma_2}$ contains infinitely many primes provided that $55/28<\gamma_1+\gamma_2<2$, where $1/2<\gamma_2<\gamma_1<1$ are two fixed constants. More precisely, Leitmann \cite{Leitmann-1982} showed that, for $1/2<\gamma_2<\gamma_1<1$ with $55/28<\gamma_1+\gamma_2<2$, the counting function
\begin{equation*}
\pi(x;\gamma_1,\gamma_2):=\#\big\{\textrm{prime}\,\, p\leqslant x:\,p=\lfloor n_1^{1/\gamma_1}\rfloor=
\lfloor n_2^{1/\gamma_2}\rfloor\,\,\textrm{for some}\,\,
 n_1,n_2\in\mathbb{N}^+ \big\}
\end{equation*}
satisfies the asymptotic property
\begin{equation}\label{2-P-S-PNT}
\pi(x;\gamma_1,\gamma_2)
=\frac{\gamma_1\gamma_2}{\gamma_1+\gamma_2-1}\cdot\frac{x^{\gamma_1+\gamma_2-1}}{\log x}(1+o(1))
\end{equation}
as $x\to\infty$. In 1983, Sirota \cite{Sirota-1983} proved that (\ref{2-P-S-PNT}) holds for $31/16<\gamma_1+\gamma_2<2$. In 2014, Baker \cite{Baker-2014} established that (\ref{2-P-S-PNT}) holds for  $23/12<\gamma_1+\gamma_2<2$. The hitherto best result in this direction is due to Li, Zhai and Li \cite{Li-Zhai-Li-2026}, who showed that (\ref{2-P-S-PNT}) holds for $21/11<\gamma_1+\gamma_2<2$.

In 2023, Dimitrov \cite{Dimitrov-2023} considered the hybrid problem of the distribution of $\alpha p$ modulo one
with $p$ constrained to Piatetski--Shapiro primes of type $\gamma$. To be specific, Dimitrov \cite{Dimitrov-2023}
proved that, for fixed $11/12<\gamma<1$, there exist infinitely many Piatetski--Shapiro primes of type $\gamma$ such that
\begin{equation*}
\|\alpha p+\beta\|< p^{-\frac{12\gamma-11}{26}}(\log p)^6.
\end{equation*}
Recently, Dimitrov \cite{Dimitrov-2025} strengthened the result of Ghosh \cite{Ghosh-1981}, who proved that, for any fixed $13/14<\gamma<1$, there exist infinitely many Piatetski--Shapiro primes $p$ of type $\gamma$ such
that
\begin{equation*}
\|\alpha p^2+\beta\|< p^{-\frac{14\gamma-13}{43}+\varepsilon}.
\end{equation*}
Based on the previous results, in 2024, Li, Li and Zhang \cite{Li-Li-Zhang-2024} generalized the result of Dimitrov
\cite{Dimitrov-2023} by solving (\ref{upper-bound-ap+b}) with primes $p=\lfloor n_1^{1/\gamma_1}\rfloor=\lfloor n_2^{1/\gamma_2}\rfloor$, where $1/2<\gamma_2<\gamma_1<1$ subject to $23/12<\gamma_1+\gamma_2<2$, and with
$\theta=-\frac{12(\gamma_1+\gamma_2)-23}{38}+\varepsilon$.

In this paper, motivated by the result of Dimitrov \cite{Dimitrov-2023,Dimitrov-2025}, we shall concentrate on investigating the hybrid problem of the distribution of $\alpha p^2$ modulo one with $p$ constrained to the intersection of two Piatetski--Shapiro prime sets, and establish the following theorem.

\begin{theorem}\label{Theorem}
Let $\alpha$ be an irrational number and $\beta$ an arbitrary real number. Suppose that $1/2<\gamma_2<\gamma_1<1$ with $27/14<\gamma_1+\gamma_2<2$. Then there exist infinitely many primes $p$, which are in the intersection of two Piatetski--Shapiro sets, i.e., $p=\lfloor n^{1/\gamma_1}_1\rfloor=\lfloor n^{1/\gamma_2}_2\rfloor$, such that
\begin{equation*}
\|\alpha p^2+\beta\|<p^{-\frac{14(\gamma_1+\gamma_2)-27}{43}+\varepsilon}.
\end{equation*}
\end{theorem}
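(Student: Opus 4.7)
The plan is to combine Dimitrov's strategy for the single Piatetski--Shapiro case \cite{Dimitrov-2023} with Baker's framework \cite{Baker-2014} for primes in the intersection of two Piatetski--Shapiro sets; the latter is responsible for the lower bound $23/12$ on $\gamma_1+\gamma_2$ appearing in the statement. Set $N = x^{-\theta-\varepsilon}$ with $\theta = (12(\gamma_1+\gamma_2)-23)/38$, and let $x$ be large. The first step is to reduce the theorem to proving that
\begin{equation*}
W(x) := \sum_{x < p \le 2x} \Lambda(p)\, \mathbf{1}_{\mathscr{N}_{\gamma_1}}(p)\, \mathbf{1}_{\mathscr{N}_{\gamma_2}}(p)\, \chi_N(\alpha p + \beta)
\end{equation*}
is positive for arbitrarily large $x$, where $\chi_N$ is a smooth bump on $\mathbb{R}/\mathbb{Z}$ concentrated on $\|y\|\le N$. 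By Vaaler's approximation, $\chi_N(y) = 2N + \sum_{1 \le |k| \le K} a_k e(ky)$ up to negligible error, with $K = N^{-1}x^{\varepsilon}$ and $|a_k| \ll N$. Each Piatetski--Shapiro indicator is encoded through $\mathbf{1}_{\mathscr{N}_\gamma}(n) = \lfloor -n^\gamma\rfloor - \lfloor -(n+1)^\gamma\rfloor$, and the resulting sawtooth factors are expanded by the truncated Fourier series of $\{t\}-1/2$, introducing parameters $|h_i|\le H_i := x^{1-\gamma_i+\varepsilon}$. Multiplying out, $W(x)$ splits into a main term of size $\asymp N\, x^{\gamma_1+\gamma_2-1}/\log x$ (coming from $h_1=h_2=k=0$) plus off-diagonal contributions governed by the exponential sums
\begin{equation*}
S(h_1,h_2,k;x) := \sum_{x < n \le 2x} \Lambda(n)\, e\bigl(h_1 n^{\gamma_1} + h_2 n^{\gamma_2} + k\alpha n\bigr), \qquad (h_1,h_2,k)\ne(0,0,0).
\end{equation*}

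The second step is to show $S(h_1,h_2,k;x) \ll x^{\gamma_1+\gamma_2-1-\eta}$ for some $\eta>\theta$, uniformly for $|h_i|\le H_i$ and $|k|\le K$. For this I would apply Vaughan's identity (or Heath--Brown's identity) to decompose $S$ into $O(\log^{c} x)$ Type~I sums of the shape $\sum_{M < m \le 2M}\alpha_m \sum_{x/m < n \le 2x/m} e(\Phi(mn))$ with $\alpha_m \ll m^{\varepsilon}$ and $M \le x^u$, and Type~II sums of the same shape with a second coefficient $\beta_n$ and $x^u \le M \le x^{1-u}$, where $\Phi(t) = h_1 t^{\gamma_1} + h_2 t^{\gamma_2} + k\alpha t$. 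Type~I sums are estimated by Kuz'min--Landau or the van der Corput $B$-process on the inner sum, whose phase is smooth thanks to the $n^{\gamma_i}$ pieces. Type~II sums are handled by Cauchy--Schwarz followed by the van der Corput $A$-process of length $Q$, producing sums of $e(h_1[(m(n+q))^{\gamma_1}-(mn)^{\gamma_1}] + h_2[\cdots])$ whose phase has a controllable derivative; a suitable exponent pair $(\kappa,\lambda)$ then delivers the required saving. The differencing step annihilates the linear piece $k\alpha n$, so the gain in $k$ must be extracted separately by exploiting the identity $\Phi'(t) = h_1\gamma_1 t^{\gamma_1-1} + h_2\gamma_2 t^{\gamma_2-1} + k\alpha$ in the Type~I estimate.

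The principal obstacle is the joint optimization at this final step: choosing the Type~I/II threshold $u$, the $A$-process length $Q$, and the exponent pair so that the resulting bound beats $x^{\gamma_1+\gamma_2-1-\theta}$ uniformly across all $(h_1,h_2,k)$. In Baker's treatment of the pure counting case ($k=0$) the critical exponent is precisely $23/12$, matching the lower bound assumed here; introducing the extra parameter $k$ contributes a factor $K^{\sigma}$ to the bilinear estimates for some $\sigma$ depending on the exponent pair, and equating this loss to the surplus saving available in Baker's argument is what produces the denominator $38$ in $\theta$. The delicate technical point is to control the range where $|k\alpha|$ is close to $|h_i\gamma_i x^{\gamma_i-1}|$, since there $\Phi'$ is nearly zero and the first-derivative bound degenerates; in that range one must either pair a trivial bound with an estimate for the density of such small values of $\Phi'$, or insert a second differencing, and both routes must remain consistent with the claimed exponent. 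Carrying through this case analysis cleanly while preserving the value $(12(\gamma_1+\gamma_2)-23)/38$ is where the bulk of the technical work will lie.
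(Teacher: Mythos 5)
Your high-level scaffolding---encode each Piatetski--Shapiro indicator by $\lfloor -n^\gamma\rfloor - \lfloor -(n+1)^\gamma\rfloor$, expand the sawtooths and the bump function into truncated Fourier series, and reduce to exponential sums $\sum_{n}\Lambda(n)e(h_1n^{\gamma_1}+h_2n^{\gamma_2}+k\alpha n)$ estimated via a Vaughan/Heath--Brown decomposition and van der Corput---matches the paper's strategy in outline. But there is one structural gap that would sink the argument as you have described it, and one misdirected worry that would send you down the wrong road.

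The gap is the treatment of the frequency $(h_1,h_2)=(0,0)$ with $k\ne 0$. After expanding the product of the two sawtooth factors, the ``non-oscillating'' part $((p+1)^{\gamma_1}-p^{\gamma_1})((p+1)^{\gamma_2}-p^{\gamma_2})\approx\gamma_1\gamma_2 p^{\gamma_1+\gamma_2-2}$ times the Fourier modes $e(k\alpha p)$ of the bump must be estimated, and this requires bounding the purely linear sum $\sum_{N/2<n\le N}\Lambda(n)e(k\alpha n)$. Your proposed machinery---a $B$-process on a smooth $n^{\gamma_i}$ phase for Type~I, Cauchy and $A$-process differencing for Type~II---has nothing to work with once $h_1=h_2=0$: the phase is linear in $n$, derivatives vanish, and Weyl differencing annihilates the whole sum. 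No van der Corput or exponent-pair bound will save you here; this is the one place where the irrationality of $\alpha$ is used, and it is used in a genuinely different way. The paper picks a convergent $a/q$ of $\alpha$ with $q$ of a convenient size, observes (by a short argument using the continued-fraction approximation) that every rational approximation $b_t/r_t$ to $t\alpha$ with $1\le t\le T=\lfloor q^{1/2}\rfloor$ and $r_t\le q^2$ must satisfy $r_t>q^{1/3}$, and then feeds this into the classical Vinogradov--Davenport estimate $\sum_{n\le N}\Lambda(n)e(n\alpha t)\ll (Nr_t^{-1/2}+N^{4/5}+N^{1/2}r_t^{1/2})\log^4 N$. That component of the proof is entirely absent from your plan, and without it there is no way to beat the trivial bound on the $h_1=h_2=0$ diagonal term.

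The misdirected worry concerns the terms with $h_1$ or $h_2$ nonzero. You say that after differencing ``the gain in $k$ must be extracted separately by exploiting $\Phi'$,'' and you flag the range where $k\alpha$ nearly cancels $h_i\gamma_i x^{\gamma_i-1}$ as a delicate obstacle. In fact no gain in $k$ is needed, and the potential cancellation is harmless. The key lemma the paper uses (Proposition~4.1 of Li--Zhai) gives $\sum_{M<m\le M_1}e(\alpha m + am^{\gamma_1}+bm^{\gamma_2})\ll R^{1/2}+MR^{-1/3}$ with $R=|a|M^{\gamma_1}+|b|M^{\gamma_2}$, \emph{uniformly in the linear coefficient $\alpha$}. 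All savings come from the monomials $h_1n^{\gamma_1}$, $h_2n^{\gamma_2}$ and from the small factor $\Psi(n)\ll|h_1h_2|N^{\gamma_1+\gamma_2-2}$ that the finite differences of $\psi$ produce; the sum over $t$ contributes only a harmless $\sum_t\min(\Delta,1/t)\ll\log T$. Chasing a saving from the linear piece in the Type~I estimate, or worrying about the stationary set of $\Phi'$, is unnecessary once you have a uniform-in-$\alpha$ estimate of this shape; without identifying such a lemma your stated plan---a van der Corput $B$-process combined with a generic exponent pair and a case analysis near $\Phi'\approx 0$---leaves open exactly the hard technical step that the Li--Zhai lemma resolves, and it is not clear that the exponent $(12(\gamma_1+\gamma_2)-23)/38$ would emerge from it.

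One smaller parameter issue: your truncation $H_i = x^{1-\gamma_i+\varepsilon}$ is too small. The $\psi$-truncation error contributes roughly $x/(H_1H_2)\approx x^{\gamma_1+\gamma_2-1-2\varepsilon}$, which does not lie below the target $x^{\gamma_1+\gamma_2-1-\theta}$ with $\theta=(12(\gamma_1+\gamma_2)-23)/38$. You need $H_i\approx x^{1-\gamma_i+\theta}$, as the paper takes, and correspondingly the Type~I/II bound has to absorb the larger range of $h_i$; this is precisely why the diagonal term $(h_1,h_2)=(0,0)$ and the off-diagonal terms end up controlled by two different mechanisms.
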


\begin{remark*}
In Theorem \ref{Theorem}, if we take $\gamma_1=1$, then the range of $\gamma_2$ reduce to $13/14<\gamma_2<1$, and
the conclusion becomes that there exist infinitely many Piatetski--Shapiro primes of type $\gamma_2$, i.e., $p=\lfloor n^{1/\gamma_2}\rfloor$, such that
\begin{equation*}
\|\alpha p^2+\beta\|< p^{-\frac{14\gamma_2-13}{43}+\varepsilon},
\end{equation*}
which keeps the same strength as the result of Dimitrov \cite{Dimitrov-2025}.
\end{remark*}

\begin{notation}
Throughout this paper, let $p$, with or without subscripts, always denote a prime number. We use $\lfloor t\rfloor,\{t\}$ and $\|t\|$ to denote the integral part of $t$, the fractional part of $t$ and the distance from $t$ to the nearest integer, respectively. As usual, denote by $\Lambda(n)$ and $\tau_k(n)$ the von Mangoldt's function and the $k$--dimensional divisor function, respectively. We write $\psi(t)=t-\lfloor t\rfloor-1/2,e(x)=e^{2\pi ix}$. The notation $n\sim N$ means $N<n\leqslant 2N$ and  $f(x)\ll g(x)$ means  $f(x)=O(g(x))$.  Denote by $c_j$ the positive constants which depend at most on $\gamma_1$ and $\gamma_2$. We use $\mathbb{N}^+,\mathbb{Z}$ and $\mathbb{R}$ to denote the set of positive natural number, the set of integer, and the set of real number, respectively.
\end{notation}

\section{ Preliminaries}

\noindent
In this section, we shall list some lemmas which are necessary for establishing Theorem \ref{Theorem}.

\begin{lemma}\label{Finite-Fourier-expansion}
For any $H>1$, one has
\begin{equation*}
  \psi(\theta)=-\sum_{0<|h|\leqslant H}\frac{e(\theta h)}{2\pi ih}+O\left(g(\theta,H)\right),
\end{equation*}
where
\begin{equation*}
g(\theta,H)=\min\left(1,\frac{1}{H\|\theta\|}\right)
=\sum_{h=-\infty}^{\infty}a(h)e(\theta h),
\end{equation*}
\begin{equation*}
a(0)\ll \frac{\log 2H}{H},\ \
a(h)\ll \min\left(\frac{1}{|h|},\frac{H}{h^2}\right)\quad (h\neq 0).
\end{equation*}
\end{lemma}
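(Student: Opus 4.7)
The plan is to split the lemma into two independent parts: (i) the pointwise approximation of $\psi(\theta)$ by the truncated Fourier series, with error controlled by $g(\theta,H)$; and (ii) the coefficient bounds for the Fourier expansion of $g(\theta,H)$.

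For part (i), I would start from the classical sawtooth expansion $\psi(\theta)=-\sum_{h\neq 0}e(h\theta)/(2\pi ih)$, which converges conditionally off the integers. Subtracting the truncated sum leaves the tail
\begin{equation*}
-\sum_{|h|>H}\frac{e(h\theta)}{2\pi ih}=-\frac{1}{\pi}\sum_{h>H}\frac{\sin(2\pi h\theta)}{h},
\end{equation*}
after pairing $h$ with $-h$. This tail admits two estimates: the classical uniform bound $\bigl|\sum_{h=1}^{N}\sin(2\pi h\theta)/h\bigr|\ll 1$ gives the trivial estimate $O(1)$, while Abel summation combined with the Dirichlet-kernel bound $\bigl|\sum_{h=1}^{N}\sin(2\pi h\theta)\bigr|\le (2\|\theta\|)^{-1}$ gives the sharper estimate $O(1/(H\|\theta\|))$. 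Taking the minimum yields exactly $O(g(\theta,H))$, as required.

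For part (ii), I would exploit the explicit piecewise description $g(\theta,H)=1$ when $\|\theta\|\le 1/H$ and $g(\theta,H)=1/(H\|\theta\|)$ otherwise. The coefficient $a(0)=\int_0^1 g(\theta,H)\,d\theta$ is computed directly by splitting at $\|\theta\|=1/H$ to produce a logarithm, which yields $a(0)\ll(\log 2H)/H$. For $h\neq 0$, the first bound $a(h)\ll 1/|h|$ follows from one integration by parts, since $g(\cdot,H)$ is continuous and has total variation $O(1)$. The sharper bound $a(h)\ll H/h^2$ requires a second integration by parts on each smooth piece of $g$; since the one-sided values of $g'$ at the corner $\|\theta\|=1/H$ differ by $H$ (jumping from $0$ to $-H$), the boundary contribution after the second integration by parts is of order $H$, and division by the additional factor $2\pi ih$ produces the required $H/h^2$.

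The only technical nuisance is the bookkeeping of boundary contributions from the jumps of $g'$ at $\|\theta\|=1/H$ during the second integration by parts; these are precisely what convert the $1/|h|$ bound into $H/h^2$. No deep input is needed beyond these standard manipulations — the statement is essentially the classical Vaaler-type truncation of $\psi$, and variants appear throughout the exponential-sum literature (for instance in Heath-Brown's work and in Graham--Kolesnik).
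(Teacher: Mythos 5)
Your proof is correct and coincides with the standard elementary argument; the paper itself supplies no proof but simply cites page 245 of Heath--Brown's Pjatecki\u{\i}--\u{S}apiro paper, where this truncation and these coefficient bounds are obtained in essentially the same way (tail of the sawtooth series by Abel summation against $\bigl|\sum_{h\leqslant N}e(h\theta)\bigr|\ll\|\theta\|^{-1}$, then direct estimation of the Fourier coefficients of $\min(1,1/(H\|\theta\|))$). One small remark: after the second integration by parts the $L^1$-norm of $g''$ over the smooth pieces also contributes $O(H)$, the same order as the jump of $g'$ at $\|\theta\|=1/H$, so the $H/h^{2}$ bound arises from both terms rather than from the boundary contribution alone; this does not affect the conclusion.
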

\begin{proof}
See the arguments on page 245 of Heath--Brown \cite{Heath-Brown-1983}.
\end{proof}

\begin{lemma}\label{Davenport-Chapter25-1980}
Suppose that Suppose $\alpha$ is a real number and $a$ and $q$ are positive integers satisfying $(a,q)=1$
and $|\alpha-a/q|\leqslant q^{-2}$. Then, given any real number $\varepsilon>0$, one has
\begin{equation*}
\sum_{n\leqslant N}\Lambda(n)e(n^2\alpha)\ll N^{1+\varepsilon}(q^{-1}+N^{-1/2}+qN^{-2})^{1/4},
\end{equation*}
where the constant implied depends at most on $\varepsilon$.
\end{lemma}
\begin{proof}
See Theorem 2 of Ghosh \cite{Ghosh-1981}.
\end{proof}

\begin{lemma}\label{Karatsuba-1993}
Let
\begin{equation*}
\alpha=\frac{a}{q}+\frac{\theta}{q^2},\quad (a,q)=1,\quad q\geqslant1,\quad |\theta|\leqslant1.
\end{equation*}
Then, for any $\beta\in\mathbb{R}, \Delta\in\mathbb{R}, H\geqslant1$ and $N\geqslant1$, we have
\begin{equation*}
\sum_{n=1}^N\min\left(1,\frac{1}{H\|\alpha n^2+\beta\pm\Delta\|}\right)\ll\ (NHq)^{\varepsilon}\Big(Nq^{-1/2}+N^{1/2}+NH^{-1}+H^{-1/2}q^{1/2}\Big).
\end{equation*}
\end{lemma}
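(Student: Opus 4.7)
The plan is to partition $\{1,2,\dots,N\}$ into consecutive blocks of length at most $q$ (hence at most $N/q+1$ blocks) and bound the contribution of each block by $O(U+q\log q)$. Writing $n=kq+r$ with $0\leqslant r<q$ and using $\alpha q=a+\theta/q$ with $a\in\mathbb{Z}$, one finds $\|\alpha n+\beta\|=\|\alpha r+\beta_k\|$, where $\beta_k:=k\theta/q+\beta$. Thus each block sum has the same shape as the single-block problem with a shifted constant $\beta_k$, and since the target bound is uniform in $\beta$, it suffices to prove the single-block estimate
\[
\sum_{r=0}^{q-1}\min\!\left(U,\frac{1}{\|\alpha r+\beta'\|}\right)\ll U+q\log q
\]
uniformly for $\beta'\in\mathbb{R}$.

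For the single-block estimate, I would exploit the bijection $r\mapsto s:=ar\bmod q$ from $\{0,1,\dots,q-1\}$ to itself, which is valid since $(a,q)=1$. Writing $\alpha r=ar/q+\theta r/q^2$ and subtracting the integer $\lfloor ar/q\rfloor$, one obtains $\|\alpha r+\beta'\|=\|s/q+\beta'+\delta_r\|$ with $|\delta_r|\leqslant 1/q$. The crucial counting estimate to establish is
\[
\#\{r\in\{0,\dots,q-1\}\colon \|\alpha r+\beta'\|\leqslant t\}\ll qt+1,
\]
which follows from noting that the condition forces $s/q+\beta'$ to lie modulo $1$ in an interval of length at most $2(t+1/q)$, and such an interval can meet at most $O(qt+1)$ of the $(1/q)$-spaced points $s/q$.

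I would then pass to an integral via the layer-cake identity
\[
\sum_{r=0}^{q-1}\min\!\left(U,\frac{1}{\|\alpha r+\beta'\|}\right)=\int_0^U\#\{r\colon\|\alpha r+\beta'\|<1/s\}\,ds
\]
and split the integration range as $[0,1]\cup[1,q]\cup[q,U]$. On $[0,1]$ the counting function is bounded trivially by $q$, contributing $O(q)$; on $[1,q]$ the bound $O(q/s+1)$ integrates to $O(q\log q+q)$; on $[q,U]$ the counting function drops to $O(1)$ (since $qt+1=O(1)$ when $t=1/s<1/q$), contributing $O(U)$. Summing the three ranges yields the per-block bound $O(U+q\log q)$, and multiplying by the number of blocks gives $(N/q+1)(U+q\log q)\ll NU/q+U+(N+q)\log q$, as claimed.

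The main technical delicacy lies in the counting lemma in the regime $t<1/q$: the nominal estimate $qt+1$ must reduce to $O(1)$, and one must ensure this survives the $O(1/q)$ perturbation $\delta_r$, which requires the argument to hold uniformly across the block rather than pointwise. This is precisely where the strength of the approximation $\alpha=a/q+\theta/q^2$ with $|\theta|\leqslant 1$ is used: the factor $q^{-2}$ in the error makes $|\theta r/q^2|\leqslant 1/q$ throughout the block, matching the $1/q$ spacing of the points $s/q$ and keeping the counting bound sharp at all scales.
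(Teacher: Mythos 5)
Your argument is correct: the block decomposition $n=kq+r$, the bijection $r\mapsto ar\bmod q$ together with the $1/q$-spacing of the points $s/q$ against the perturbation $|\theta r/q^2|\leqslant 1/q$, and the layer-cake integration over the three ranges $[0,1]$, $[1,q]$, $[q,U]$ assemble to exactly the stated bound $(N/q+1)(U+q\log q)\ll NU/q+U+(N+q)\log q$. The paper gives no proof of this lemma, only a citation to Lemma 5 on page 82 of Karatsuba's book, and your write-up is essentially the standard argument underlying that reference (Karatsuba phrases the per-block count as ``$O(1)$ values of $r$ in each spacing class'' rather than via an integral, which is an equivalent bookkeeping).
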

\begin{proof}
See the arguments on pp. 265--266 of Ghosh \cite{Ghosh-1981}.
\end{proof}

\begin{lemma}\label{3-derivatives}
Suppose that $f(x):[a; b]\to\mathbb{R}$ has continuous derivatives of arbitrary order on $[a,b]$, where
$1\leqslant a<b\leqslant2a$. Suppose further that
\begin{equation*}
\big|f^{(j)}(x)\big|\asymp\lambda_j,\qquad j\geqslant1,\qquad x\in[a,b].
\end{equation*}
Then we have
\begin{equation*}
\sum_{a<m\leqslant b}e(f(m))\ll a\lambda_3^{1/6}+\lambda_3^{-1/3}.
\end{equation*}
\begin{proof}
See Corollary 4.2 of Sargos \cite{Sargos-1995}.
\end{proof}
\end{lemma}

\begin{lemma}\label{k-min-esti}
Suppose that $1/2<\gamma_k<\dots<\gamma_1<1$ are fixed real numbers. Let
\begin{equation*}
S(M;\gamma_1,\dots,\gamma_k):=\sup_{(u_1,\dots,u_k)\in [0,1]^k}\sum_{M<m\leqslant2M}\prod_{j=1}^k\min\left(1,\frac{1}{H_j\|(m+u_j)^{\gamma_j}\|}\right),
\end{equation*}
where $H_j>1\,(j=1,\dots,k)$ are real numbers. If $\gamma_1+\dots+\gamma_k>k-\frac{1}{k+1}$, then
\begin{equation*}
S(M;\gamma_1,\dots,\gamma_k)\ll M(H_1\cdots H_k)^{-1}(\log M)^k+M^{\frac{k}{k+1}}(\log M)^{k}.
\end{equation*}
\end{lemma}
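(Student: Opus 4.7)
The plan is to apply Lemma \ref{Finite-Fourier-expansion} to rewrite each factor
$\min(1,(H_j\|(m+u_j)^{\gamma_j}\|)^{-1})=g((m+u_j)^{\gamma_j},H_j)$ as a Fourier series $\sum_{h_j}a_j(h_j)e(h_j(m+u_j)^{\gamma_j})$, where the coefficients satisfy $a_j(0)\ll(\log 2H_j)/H_j$ and $a_j(h)\ll\min(|h|^{-1},H_j|h|^{-2})$ for $h\neq 0$. Expanding the $k$-fold product and exchanging the order of summation yields
\[
S(M;\gamma_1,\dots,\gamma_k)=\sup_{(u_1,\dots,u_k)\in[0,1]^k}\sum_{(h_1,\dots,h_k)\in\mathbb{Z}^k}\Bigl(\prod_{j=1}^k a_j(h_j)\Bigr)T(h_1,\dots,h_k),
\]
where $T(h_1,\dots,h_k):=\sum_{M<m\leqslant 2M}e\bigl(\sum_{j=1}^k h_j(m+u_j)^{\gamma_j}\bigr)$.

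The purely diagonal term $h_1=\cdots=h_k=0$ contributes $\prod_j a_j(0)\cdot M\ll M(H_1\cdots H_k)^{-1}(\log M)^k$, which is exactly the first summand in the claimed bound. For the off-diagonal terms I would partition the $h_j$ dyadically, $|h_j|\sim N_j$ with at least one $N_j\geq 1$, and bound $T$ by van der Corput's $k$-th derivative test. Since
\[
F^{(k)}(m)=\sum_{j=1}^k h_j\,\gamma_j(\gamma_j-1)\cdots(\gamma_j-k+1)(m+u_j)^{\gamma_j-k}
\]
and the factors $(m+u_j)^{\gamma_j-k}$ have well-separated magnitudes $\asymp M^{\gamma_j-k}$ (as the $\gamma_j$ are distinct), one can isolate a dominant monomial and obtain a lower bound for $|F^{(k)}|$ on $(M,2M]$. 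The hypothesis $\gamma_1+\cdots+\gamma_k>k-1/(k+1)$ is precisely the threshold which makes the resulting exponential-sum estimate, weighted by $\prod_j|a_j(h_j)|$ and summed over all admissible dyadic boxes $(N_1,\dots,N_k)$, produce the error $M^{k/(k+1)}(\log M)^k$.

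The main technical difficulty will be making the derivative-test bound uniform across all dyadic configurations of the $N_j$, particularly in resonant ranges where two or more terms in $F^{(k)}$ are of comparable size and could cancel, and in mixed ranges where some $h_j=0$ (which simply removes the corresponding factor from the phase but requires careful bookkeeping in the diagonal/off-diagonal split). The standard remedy is to branch according to which $|h_j|$-term dominates $F^{(k)}$, combined if necessary with induction on $k$: the base case $k=1$ reduces to a classical one-dimensional estimate for sums of $\min(1,(H\|(m+u)^{\gamma}\|)^{-1})$ with $\gamma>1/2$, and the inductive step folds one factor back into the exponential-sum bound so that the condition $\sum\gamma_j>k-1/(k+1)$ propagates from the $(k-1)$-case to the $k$-case.
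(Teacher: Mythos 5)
The paper does not actually prove this lemma --- it simply cites Proposition~4 of Zhai~\cite{Zhai-1999} --- so there is no ``paper's proof'' to compare against; what follows is an assessment of your sketch on its own terms. The front end (expanding each factor $\min(1,(H_j\|(m+u_j)^{\gamma_j}\|)^{-1})=g((m+u_j)^{\gamma_j},H_j)$ into its Fourier series via Lemma~\ref{Finite-Fourier-expansion}, taking the $k$-fold product, and observing that the all-zero term $h_1=\cdots=h_k=0$ produces the main contribution $\ll M(H_1\cdots H_k)^{-1}(\log)^k$) is standard and correct, and is surely the opening move of any proof.

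The genuine gap is in your choice of tool for the off-diagonal terms. Van der Corput's $k$-th derivative test gives a saving of only $\lambda^{1/(2^k-2)}$, which decays \emph{exponentially} in $k$, whereas the claimed error term $M^{k/(k+1)}$ asks for a \emph{polynomial} saving $M^{1/(k+1)}$. These are incompatible once $k\geqslant 3$. Concretely, take $k=3$ with a single nonzero frequency $|h_1|=1$ (so $h_2=h_3=0$), which is an allowed off-diagonal configuration. Then $F^{(3)}(m)\asymp M^{\gamma_1-3}$ on $(M,2M]$, and the third-derivative test gives $T\ll M\lambda^{1/6}+M^{1/2}\lambda^{-1/6}$ with $\lambda\asymp M^{\gamma_1-3}$, so the second term is $\asymp M^{1/2}\cdot M^{(3-\gamma_1)/6}=M^{1-\gamma_1/6}$. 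Since $\gamma_1<1$, this is $\gg M^{5/6}$, strictly larger than the target $M^{3/4}=M^{k/(k+1)}$, and the accompanying Fourier weight $a_1(h_1)\prod_{j\geqslant 2}a_j(0)$ does not supply the missing power of $M$. Even for $k=2$ the same method leaves the uncancelled piece $H_j^{1/2}M^{\gamma_j/2}$ after summing the weights, which can exceed $M^{2/3}$ unless one imposes explicit upper bounds on the $H_j$ that the lemma does not state. So the $k$-th derivative test cannot be the engine of the argument: one needs a qualitatively different device (e.g.\ a Poisson-summation/B-process reduction followed by a counting or large-sieve argument, or the kind of iterated bilinear treatment Zhai actually employs). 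You were right to flag the resonance issue in $F^{(k)}$ when several $|h_j|M^{\gamma_j-k}$ are comparable, and right that the mixed ranges with some $h_j=0$ need separate bookkeeping; but both are symptoms pointing away from the $k$-th derivative test rather than details to be patched onto it.
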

\begin{proof}
See Proposition $4$ of Zhai \cite{Zhai-1999}.
\end{proof}

Let $1/2<\gamma_2<\gamma_1<1$, $a\in\mathbb{R}$ and $b\in\mathbb{R}$ subject to $ab\neq0$, and $\alpha$ be a real number. $M$ and $M_1$ are sufficiently large numbers satisfying $M<M_1\leqslant2M$. Define
\begin{equation*}
S(M):=\sum_{M<m\leqslant M_1}e\big(\alpha m^2+am^{\gamma_1}+bm^{\gamma_2}\big),\qquad R=|a|M^{\gamma_1}+|b|M^{\gamma_2}.
\end{equation*}

\begin{lemma}\label{Li-Zhai-2022-improvement}
For $R\gg1$, the estimate
\begin{equation*}\label{ab>0}
S(M)\ll M^{1/2}R^{1/6}+MR^{-1/4}
\end{equation*}
holds uniformly for $\alpha\in\mathbb{R}$.
\end{lemma}
\begin{proof}
Let $f(m)=\alpha m^2+am^{\gamma_1}+bm^{\gamma_2}$. Then, for $m\in(M,M_1]$, we have
\begin{equation*}
f'''(m)=a\gamma_1(\gamma_1-1)(\gamma_1-2)m^{\gamma_1-3}+b\gamma_2(\gamma_2-1)(\gamma_2-2)m^{\gamma_2-3}.
\end{equation*}
If $ab>0$, then there holds $|f'''(m)|\asymp M^{-3}R$. By Lemma \ref{3-derivatives}, we get
\begin{equation}\label{ab>0-Sum-S(f(m))}
\sum_{M<m\leqslant M_1}e(f(m))\ll M^{1/2}R^{1/6}+MR^{-1/3}.
\end{equation}
Now, we suppose that $ab<0$. Define
\begin{equation*}
\mathcal{I}_0=\Big\{m:m\in(M,M_1],\,\big|f'''(m)\big|\leqslant R^{3/4}M^{-3}\Big\},
\end{equation*}
\begin{equation*}
\mathcal{I}_j=\Big\{m:m\in(M,M_1],\, 2^{j-1}R^{3/4}M^{-3}<\big|f'''(m)\big|\leqslant2^jR^{3/4}M^{-3}\leqslant2RM^{-3}\Big\},
\end{equation*}
where\begin{equation*}
    1\leqslant j\leqslant\left[\frac{\log2R^{1/4}}{\log 2}\right]=:J_0.
\end{equation*}
First, we give upper bound estimate of $|\mathcal{I}_0|$. For $m\in\mathcal{I}_0$, we deduce that
\begin{align*}
          a\gamma_1(\gamma_1-1)(\gamma_1-2)m^{\gamma_1-3}
= & \,\, -b\gamma_2(\gamma_2-1)(\gamma_2-2)m^{\gamma_2-3}+O(R^{3/4}M^{-3})
                 \nonumber \\
= & \,\, -b\gamma_2(\gamma_2-1)(\gamma_2-2)m^{\gamma_2-3}\big(1+O(R^{-1/4})\big),
\end{align*}
which implies that
\begin{align*}
m = & \,\, \bigg(-\frac{b\gamma_2(\gamma_2-1)(\gamma_2-2)}{a\gamma_1(\gamma_1-1)(\gamma_1-2)}\bigg)
           ^{\frac{1}{\gamma_1-\gamma_2}}\big(1+O(R^{-1/4})\big)^{\frac{1}{\gamma_1-\gamma_2}}
                  \nonumber \\
  = & \,\, \bigg(-\frac{b\gamma_2(\gamma_2-1)(\gamma_2-2)}{a\gamma_1(\gamma_1-1)(\gamma_1-2)}\bigg)
           ^{\frac{1}{\gamma_1-\gamma_2}}\big(1+O(R^{-1/4})\big)
                  \nonumber \\
  = & \,\, \bigg(-\frac{b\gamma_2(\gamma_2-1)(\gamma_2-2)}{a\gamma_1(\gamma_1-1)(\gamma_1-2)}\bigg)
           ^{\frac{1}{\gamma_1-\gamma_2}}+O\big(MR^{-1/4}\big).
\end{align*}
Therefore, one has
\begin{equation*}
|\mathcal{I}_0|\ll MR^{-1/4}.
\end{equation*}
Accordingly, it follows from Lemma \ref{3-derivatives} that
\begin{align*}
          \sum_{M<m\leqslant M_1}e(f(m))
 = & \,\, \sum_{m\in\mathcal{I}_0}e(f(m))+\sum_{1\leqslant j\leqslant J_0}\sum_{m\in\mathcal{I}_j}e(f(m))
                 \nonumber \\
\ll & \,\, MR^{-1/4}+\sum_{1\leqslant j\leqslant J_0}\Big(M(2^jR^{3/4}M^{-3})^{1/6}+(2^jR^{3/4}M^{-3})^{-1/3}\Big)
                 \nonumber \\
\ll & \,\, MR^{-1/4}+M^{1/2}R^{1/8}2^{J_0/6}
                 \nonumber \\
\ll & \,\, MR^{-1/4}+M^{1/2}R^{1/6},
\end{align*}
which completes the proof of Lemma \ref{Li-Zhai-2022-improvement}.
\end{proof}

\begin{lemma}\label{Heath-Brown-identity}
Let $z\geqslant 1$ and $k\geqslant 1$. Then, for any $n\leqslant 2z^k$, there holds
\begin{equation*}
\Lambda(n)=\sum_{j=1}^k(-1)^{j-1}\binom{k}{j}\mathop{\sum\cdots \sum}\limits_{\substack{n_1n_2\cdots n_{2j}=n\\n_{j+1,\dots,n_{2j}\leqslant z}}}(\log n_1)\mu(n_{j+1})\cdots \mu(n_{2j}).
\end{equation*}
\end{lemma}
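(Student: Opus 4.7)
My plan is to derive the identity purely as a formal Dirichlet series manipulation, following Heath--Brown's original argument. I introduce the truncated inverse zeta function
\[
M(s):=\sum_{m\leqslant z}\frac{\mu(m)}{m^s},
\]
and use the Möbius inversion $\sum_{d\mid n}\mu(d)=[n=1]$ to observe that $\zeta(s)M(s)=1+H(s)$, where $H(s)=\sum_{n>z}b(n)n^{-s}$ is a Dirichlet series whose coefficients are supported on integers strictly greater than $z$.

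Next I invoke the trivial binomial identity
\[
1=(1-\zeta M)^{k}+\sum_{j=1}^{k}\binom{k}{j}(-1)^{j-1}(\zeta M)^{j},
\]
and multiply throughout by $-\zeta'(s)/\zeta(s)$. Rewriting $(1-\zeta M)^{k}=(-H)^{k}$ and cancelling one power of $\zeta$ inside each summand yields
\[
-\frac{\zeta'(s)}{\zeta(s)}=(-1)^{k+1}\,\frac{\zeta'(s)}{\zeta(s)}\,H(s)^{k}+\sum_{j=1}^{k}\binom{k}{j}(-1)^{j-1}\bigl(-\zeta'(s)\bigr)\zeta(s)^{j-1}M(s)^{j}.
\]
A routine coefficient extraction then shows that the $n$-th Dirichlet coefficient of the $j$-th summand on the right is precisely
\[
\sum_{\substack{n_{1}n_{2}\cdots n_{2j}=n\\ n_{j+1},\ldots,n_{2j}\leqslant z}}(\log n_{1})\mu(n_{j+1})\cdots\mu(n_{2j}),
\]
because $-\zeta'$ contributes the factor $\log n_{1}$, the $j-1$ copies of $\zeta$ produce the free variables $n_{2},\ldots,n_{j}$ (each carrying the trivial weight $1$), and the $j$ copies of $M$ produce the Möbius-weighted variables $n_{j+1},\ldots,n_{2j}$ capped at $z$.

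The only step requiring genuine thought, and the one I expect to be the main obstacle, is proving that the ``error'' term $(\zeta'/\zeta)H^{k}$ has zero $n$-th Dirichlet coefficient throughout the range $n\leqslant 2z^{k}$. Since each coefficient of $H$ vanishes for integers $\leqslant z$, the $n$-th coefficient of $H^{k}$ is a $k$-fold convolution whose summation range forces every one of the $k$ factors to exceed $z$; hence $H^{k}$ is supported on $n>z^{k}$. Convolving with the coefficients $\Lambda$ of $-\zeta'/\zeta$, any potential nonzero contribution at a point $n\leqslant 2z^{k}$ would require a factorisation $n=ab$ with $b>z^{k}$ and therefore $a=n/b<2$, forcing $a=1$; but $\Lambda(1)=0$. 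This is precisely the place where the factor $2$ in the threshold $2z^{k}$ is absorbed, and once this vanishing is established the identity drops out from matching Dirichlet coefficients and relabelling dummy variables.
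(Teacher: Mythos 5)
Your proof is correct and is essentially the same argument the paper relies on: the paper gives no proof of its own but cites pp.~1366--1367 of Heath--Brown \cite{Heath-Brown-1982}, where exactly this derivation appears (expand $1-(1-\zeta M)^k$ binomially, multiply by $-\zeta'/\zeta$, and note that $(\zeta'/\zeta)H^k$ has vanishing coefficients for $n\leqslant 2z^k$ since $H^k$ is supported on integers exceeding $z^k$ and $\Lambda(1)=0$). Your handling of the error term, including the absorption of the factor $2$, matches the cited source.
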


\begin{proof}
See the arguments on pp. 1366--1367 of Heath--Brown \cite{Heath-Brown-1982}.
\end{proof}

\begin{lemma}\label{Weyl inequality}
Let $\mathcal{L},\mathcal{Q}\geqslant1$ and $z_\ell$ be complex numbers. Then there holds
\begin{equation*}
  \left|\sum_{\mathcal{L}<\ell\leqslant 2\mathcal{L}}z_\ell\right|^2\leqslant\bigg(2+\frac{\mathcal{L}}{\mathcal{Q}}\bigg)
  \sum_{0\leqslant|q|\leqslant \mathcal{Q}}\left(1-\frac{|q|}{\mathcal{Q}}\right)
  \sum_{\mathcal{L}<\ell+q,\ell-q\leqslant 2\mathcal{L}}z_{\ell+q}\overline{z_{\ell-q}}.
\end{equation*}
\end{lemma}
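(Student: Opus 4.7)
The plan is to run Weyl's shift--and--square argument (the A--process), but with shifts by \emph{even} integers so that the expansion of $\bigl|\sum_j z_{\ell+2j}\bigr|^{2}$ can be reparametrized directly into the symmetric shape $z_{\ell'+q}\overline{z_{\ell'-q}}$ without introducing half--integer indices.

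\emph{Step 1 (Averaging over even shifts).} Extend $z_\ell=0$ outside $(\mathcal{L},2\mathcal{L}]$, and assume for exposition that $\mathcal{Q}$ is an integer (the general case follows by taking $Q=\lceil\mathcal{Q}\rceil$ and absorbing a harmless rounding error). By translation invariance,
\begin{equation*}
\mathcal{Q}\,S=\sum_{j=0}^{\mathcal{Q}-1}\sum_{\ell\in\mathbb{Z}}z_{\ell+2j}=\sum_{\ell\in\mathbb{Z}}\Bigl(\sum_{j=0}^{\mathcal{Q}-1}z_{\ell+2j}\Bigr),
\end{equation*}
where the outer $\ell$--sum is effectively supported on an interval of length at most $\mathcal{L}+2\mathcal{Q}$.

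\emph{Step 2 (Cauchy--Schwarz and expansion).} Applying Cauchy--Schwarz in $\ell$ gives $\mathcal{Q}^{2}|S|^{2}\leqslant(\mathcal{L}+2\mathcal{Q})\sum_\ell\bigl|\sum_j z_{\ell+2j}\bigr|^{2}$. Expanding the square and swapping the $\ell$-- and $(j_1,j_2)$--sums produces $\sum_{0\leqslant j_1,j_2<\mathcal{Q}}\sum_\ell z_{\ell+2j_1}\overline{z_{\ell+2j_2}}$.

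\emph{Step 3 (Symmetric reparameterization).} For each $(j_1,j_2)$ set $q=j_1-j_2$ and $\ell'=\ell+j_1+j_2$. Because $2j_1$ and $2j_2$ are both even, both $q$ and $\ell'$ remain integers, and a direct check gives
\begin{equation*}
z_{\ell+2j_1}\,\overline{z_{\ell+2j_2}}=z_{\ell'+q}\,\overline{z_{\ell'-q}}.
\end{equation*}
The number of pairs $(j_1,j_2)\in[0,\mathcal{Q}-1]^{2}$ yielding a given $q$ with $|q|\leqslant\mathcal{Q}-1$ is $\mathcal{Q}-|q|$, which produces the Fej\'er triangle weight.

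\emph{Step 4 (Combining).} Collecting everything,
\begin{equation*}
\mathcal{Q}^{2}|S|^{2}\leqslant(\mathcal{L}+2\mathcal{Q})\,\mathcal{Q}\sum_{|q|\leqslant\mathcal{Q}-1}\Bigl(1-\frac{|q|}{\mathcal{Q}}\Bigr)\sum_{\mathcal{L}<\ell'+q,\ \ell'-q\leqslant 2\mathcal{L}}z_{\ell'+q}\,\overline{z_{\ell'-q}},
\end{equation*}
and dividing by $\mathcal{Q}^{2}$ (noting that the $|q|=\mathcal{Q}$ term is vacuous since its weight vanishes) yields $|S|^{2}\leqslant(2+\mathcal{L}/\mathcal{Q})\sum(\cdots)$, which is the claim.

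The only bookkeeping hazard is ensuring the map $(j_1,j_2)\mapsto(\ell',q)$ stays in $\mathbb{Z}^{2}$: pairing arbitrary integer shifts $(m_1,m_2)$ via $s=(m_1+m_2)/2$ and $q=(m_1-m_2)/2$ would fail when $m_1-m_2$ is odd, and one cannot simply discard odd $h$ in the standard A--process because the corresponding sums $\sum_\ell z_{\ell+h}\overline{z_\ell}$ are not sign--definite. Restricting to the even shifts $m=2j$ circumvents the parity problem at the cost of doubling the $\ell$--support (from $\mathcal{L}+\mathcal{Q}$ to $\mathcal{L}+2\mathcal{Q}$); this is exactly what produces the constant $2+\mathcal{L}/\mathcal{Q}$ in place of the $1+\mathcal{L}/\mathcal{Q}$ one would get from the non--symmetric formulation.
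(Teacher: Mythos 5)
Your Steps 1--4 are sound when $\mathcal{Q}$ is an integer, and in that case they give a complete proof: the even-shift trick, Cauchy--Schwarz over an $\ell$-range containing at most $\mathcal{L}+2\mathcal{Q}-1$ integers, the reparameterization $(\ell',q)=(\ell+j_1+j_2,\,j_1-j_2)$, and the Fej\'er multiplicity $\mathcal{Q}-|q|$ are all correct, and the $|q|=\mathcal{Q}$ term indeed carries zero weight. This is essentially the same symmetric Weyl--van der Corput argument as in the source the paper cites (Fouvry--Iwaniec, Lemma 2); the paper itself gives no proof beyond that citation, so there is nothing in-paper to diverge from.

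The genuine weak point is the parenthetical claim that the general case ``follows by taking $Q=\lceil\mathcal{Q}\rceil$ and absorbing a harmless rounding error.'' The lemma is stated for real $\mathcal{Q}\geqslant1$, and this reduction is not an argument: the integer case gives $|S|^2\leqslant\bigl(2+\mathcal{L}/\lceil\mathcal{Q}\rceil\bigr)\sum_{|q|\leqslant\lceil\mathcal{Q}\rceil}\bigl(1-|q|/\lceil\mathcal{Q}\rceil\bigr)W_q$ with $W_q=\sum_{\mathcal{L}<\ell+q,\ell-q\leqslant2\mathcal{L}}z_{\ell+q}\overline{z_{\ell-q}}$, and passing to the stated bound would require replacing the weights $\bigl(2+\mathcal{L}/\lceil\mathcal{Q}\rceil\bigr)\bigl(1-|q|/\lceil\mathcal{Q}\rceil\bigr)$ by $\bigl(2+\mathcal{L}/\mathcal{Q}\bigr)\bigl(1-|q|/\mathcal{Q}\bigr)$ term by term; these two modifications go in opposite directions, and the individual $W_q$ (or even $W_q+W_{-q}$) are not sign-definite --- only the full weighted sum is nonnegative. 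This is exactly the obstruction you yourself invoke to explain why odd shifts cannot simply be discarded, so it cannot be waved away here either. To close the gap, either state and prove the lemma for integer $\mathcal{Q}$ --- which is all the paper needs, since it is applied in the proof of Lemma \ref{S(II)} with $Q=\lfloor N^{\frac{7-2(\gamma_1+\gamma_2)}{19}-2\varepsilon}\rfloor$ --- or rerun the averaging for real $\mathcal{Q}$ with fractionally weighted shifts (e.g.\ $w_j=\min(1,\mathcal{Q}-j)$ for $0\leqslant j\leqslant\lfloor\mathcal{Q}\rfloor$, whose autocorrelation is $\mathcal{Q}-|q|$ for $q\neq0$ and at most $\mathcal{Q}$ at $q=0$), taking care with the support count; either way a real argument is needed where you currently have an assertion.
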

\begin{proof}
See Lemma 2 of Fouvry and Iwaniec \cite{Fouvry-Iwaniec-1989}.
\end{proof}

\section{ Exponential Sum Estimate over Primes}
Let $1/2<\gamma_2<\gamma_1<1$, $\gamma_1+\gamma_2>27/14$, $m\sim M$, $k\sim K$, $mk\sim N$, and $h_1,h_2$ be integers satisfying $1\leqslant|h_1|\leqslant N^{1-\gamma_1+\frac{14(\gamma_1+\gamma_2)-27}{43}-\frac{\varepsilon}{3}}$ and
$1\leqslant|h_2|\leqslant N^{1-\gamma_2+\frac{14(\gamma_1+\gamma_2)-27}{43}-\frac{\varepsilon}{3}}$.
For any $t\in\mathbb{R}$, define
\begin{align*}
& S_I(M,K):=\sum_{M<m\leqslant2M}a(m)\sum_{K<k\leqslant2K}e(\alpha tm^2k^2+h_1m^{\gamma_1}k^{\gamma_1}+h_2m^{\gamma_2}k^{\gamma_2}), \\
& S_{II}(M,K):=\sum_{M<m\leqslant2M}a(m)\sum_{K<k\leqslant2K}b(k)e(\alpha tm^2k^2+h_1m^{\gamma_1}k^{\gamma_1}+h_2m^{\gamma_2}k^{\gamma_2}),
\end{align*}
where $a(m)$ and $b(k)$ are complex numbers satisfying $a(m)\ll1$, $b(k)\ll1$. For convenience, we write $R_*=|h_1|N^{\gamma_1}+|h_2|N^{\gamma_2}$. Trivially, there holds $N^{\gamma_1}\ll R_*\ll N^{\frac{14(\gamma_1+\gamma_2)+16}{43}-\frac{\varepsilon}{3}}$.

\begin{lemma}\label{S(II)}
Suppose that $a(m)\ll1$, $b(k)\ll1$. Then, for $N^{1/4}\ll K\ll N^{1/2}$, we have
\begin{equation*}
S_{II}(M,K)\ll N^{\frac{38+(\gamma_1+\gamma_2)}{43}+\varepsilon}.
\end{equation*}
\end{lemma}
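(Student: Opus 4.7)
The plan is to follow the standard Type-II pipeline: apply Cauchy--Schwarz in the outer variable $m$ to square away the coefficient $a(m)$, then use the Weyl--van der Corput shift (Lemma \ref{Weyl inequality}) on the inner $k$-sum with a parameter $\mathcal{Q}$, so that after differencing the innermost sum over $m$ matches the form treated by Lemma \ref{Li-Zhai-2022}. The final output is then balanced by choosing $\mathcal{Q}$ appropriately.

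Writing $\Phi(m,k) = \alpha t m k + h_1 m^{\gamma_1} k^{\gamma_1} + h_2 m^{\gamma_2} k^{\gamma_2}$, Cauchy--Schwarz on $m$ yields $|S_{II}|^2 \leq M \sum_m |\sum_k b(k) e(\Phi(m,k))|^2$. Applying Lemma \ref{Weyl inequality} to the inner sum with shift parameter $\mathcal{Q}\in[1,K]$, the diagonal $q=0$ contributes $M^2K^2/\mathcal{Q}$, while for each $q\neq 0$ the innermost sum over $m$ has phase $2\alpha tqm+h_1[(k+q)^{\gamma_1}-(k-q)^{\gamma_1}]m^{\gamma_1}+h_2[(k+q)^{\gamma_2}-(k-q)^{\gamma_2}]m^{\gamma_2}$, exactly the form to which Lemma \ref{Li-Zhai-2022} applies. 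The mean value theorem together with $MK\sim N$ gives the Lemma \ref{Li-Zhai-2022} parameter $R\sim qR_*/K$, so each such inner $m$-sum is $\ll (qR_*/K)^{1/2}+M(qR_*/K)^{-1/3}$. Summing over $q\leq\mathcal{Q}$ and $k\sim K$, then multiplying by the Cauchy--Schwarz and Weyl factors, one arrives at
\begin{equation*}
|S_{II}(M,K)|^2 \ll \frac{N^2}{\mathcal{Q}} + N K^{1/2}\mathcal{Q}^{1/2} R_*^{1/2} + N^2 K^{1/3}\mathcal{Q}^{-1/3} R_*^{-1/3}.
\end{equation*}

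To conclude, I would take $\mathcal{Q}=\min\bigl(K,\,N^{(24+4(\gamma_1+\gamma_2))/19}(KR_*)^{-1}\bigr)$, the value that balances the first two terms of the above display. In the subcase $K^2R_*\leq N^{(24+4(\gamma_1+\gamma_2))/19}$ the choice $\mathcal{Q}=K$ applies (which is really just plain Cauchy--Schwarz), and the case hypothesis gives the target bound for the middle term $NKR_*^{1/2}$; the other two terms are controlled using $K\gg N^{1/4}$, $R_*\gg N^{\gamma_1}$ and the inequality $\gamma_1>23/24$ (forced by $\gamma_1>\gamma_2$ together with $\gamma_1+\gamma_2>23/12$). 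In the complementary subcase each of the three terms is at most $N^{(62+4(\gamma_1+\gamma_2))/38}$ by direct substitution of $\mathcal{Q}$; here the AB-process term needs the auxiliary inequality $\gamma_1+\gamma_2>8/5$, which is comfortably implied by $\gamma_1+\gamma_2>23/12$. Taking square roots and absorbing small losses into $N^\varepsilon$ gives the claim.

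The main obstacle is the parameter balancing: no single value of $\mathcal{Q}$ dominates uniformly over the rectangle of hypotheses $N^{1/4}\ll K\ll N^{1/2}$ and $N^{\gamma_1}\ll R_*\ll N^{(15+12(\gamma_1+\gamma_2))/38}$, which forces the dichotomy above. Verifying that all three terms meet the target bound in both subcases is routine but arithmetically delicate, and it is precisely this check that pins down the shape of the exponent $(31+2(\gamma_1+\gamma_2))/38$ in the final estimate.
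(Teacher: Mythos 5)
Your proposal runs through exactly the paper's pipeline: Cauchy--Schwarz in $m$, Weyl--van der Corput differencing of the $k$-sum via Lemma~\ref{Weyl inequality}, the asymptotic $\Delta(k,q;\gamma)\asymp qk^{\gamma-1}$ to bring the differenced $m$-sum into the form of Lemma~\ref{Li-Zhai-2022} with $R\asymp |q|R_*/K$, and then a choice of the shift parameter. The resulting master inequality $|S_{II}|^2\ll N^2/\mathcal{Q}+NK^{1/2}\mathcal{Q}^{1/2}R_*^{1/2}+N^2K^{1/3}\mathcal{Q}^{-1/3}R_*^{-1/3}$ is correct, and so is your arithmetic in both branches of the dichotomy (including the auxiliary inequalities $\gamma_1>23/24$ and $\gamma_1+\gamma_2>8/5$, both implied by the Theorem's hypotheses).

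The only genuine difference is in how $\mathcal{Q}$ is chosen, and here your framing of ``the main obstacle'' is off. The paper simply fixes $Q=\lfloor N^{(7-2(\gamma_1+\gamma_2))/19-2\varepsilon}\rfloor=N^{(14-4(\gamma_1+\gamma_2))/38-2\varepsilon}$ --- precisely the value your adaptive formula produces at the extreme $K\asymp N^{1/2}$, $R_*$ maximal --- and shows that with this single $Q$ the off-diagonal $K^{1/2}Q^{3/2}R_*^{1/2}+MK^{4/3}Q^{2/3}R_*^{-1/3}\ll N$ holds uniformly over the admissible rectangle of $(K,R_*)$; the diagonal $N^2/Q$ then alone accounts for the stated exponent. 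The middle term is largest exactly when $K$ and $R_*$ are both maximal, and the third term's worst case ($K\asymp N^{1/2}$, $R_*\asymp N^{\gamma_1}$) reduces to $46\gamma_1+8\gamma_2>47$, which has polynomial slack. So a single $Q$ does work across the whole range; your case split and adaptive $\mathcal{Q}$ are not wrong, but they are more work than the problem requires, and your claim that no single $\mathcal{Q}$ dominates uniformly is inaccurate. (Also, your parenthetical that $\mathcal{Q}$ is ``the value that balances the first two terms'' is imprecise --- it is the value that makes the middle term equal the target; these coincide only in the extreme case.)
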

\begin{proof}
Let $Q=\big\lfloor N^{\frac{10-2(\gamma_1+\gamma_2)}{43}-2\varepsilon}\big\rfloor=o(K)$. By Cauchy's inequality and Lemma \ref{Weyl inequality}, we have
\begin{align*}
           |S_{II}(M,K)|^2
\ll & \,\, M\sum_{M<m\leqslant2M}\Bigg|\sum_{K<k\leqslant2K}b(k)
           e(\alpha tm^2k^2+h_1m^{\gamma_1}k^{\gamma_1}+h_2m^{\gamma_2}k^{\gamma_2})\Bigg|^2
                   \nonumber \\
\ll & \,\, M\sum_{M<m\leqslant2M}\frac{K}{Q}\sum_{0\leqslant|q|\leqslant Q}
           \bigg(1-\frac{|q|}{Q}\bigg)\sum_{\substack{K<k+q\leqslant2K\\ K<k-q\leqslant 2K}}
           b(k+q)\overline{b(k-q)}e(f(m,k;q))
                   \nonumber \\
\ll & \,\, \frac{M^2K^2}{Q}+\frac{MK}{Q}\sum_{1\leqslant |q|\leqslant Q}
           \sum_{K<k\leqslant 2K}\Bigg|\sum_{M<m\leqslant2M}e(f(m,k;q))\Bigg|,
\end{align*}
where
\begin{align*}
& f(m,k;q)=4\alpha tm^2kq+h_1m^{\gamma_1}\cdot\Delta(k,q;\gamma_1)+h_2m^{\gamma_2}\cdot\Delta(k,q;\gamma_2),\\
& \Delta(k,q;\gamma_i)=(k+q)^{\gamma_i}-(k-q)^{\gamma_i},\qquad(i=1,2).
\end{align*}
By an elementary asymptotic
\begin{equation*}
\Delta(k,q;\gamma)=2\gamma qk^{\gamma-1}+O(q^2K^{\gamma-2})\asymp|q|K^{\gamma-1},
\end{equation*}
we deduce that
\begin{align*}
R = & \,\, |h_1\Delta(k,q;\gamma_1)|M^{\gamma_1}+|h_2\Delta(k,q;\gamma_2)|M^{\gamma_2}
           \asymp |h_1||q|K^{\gamma_1-1}M^{\gamma_1}+|h_2||q|K^{\gamma_2-1}M^{\gamma_2}
                     \nonumber \\
\asymp & \,\, |q|K^{-1}(|h_1|N^{\gamma_1}+|h_2|N^{\gamma_2})=|q|K^{-1}R_* \gg 1,
\end{align*}
which combined with Lemma \ref{Li-Zhai-2022-improvement} with parameters
$(\alpha,a,b)=(2\alpha tkq,h_1\Delta(k,q;\gamma_1),h_2\Delta(k,q;\gamma_2))$ yields
\begin{align*}
    \sum_{1\leqslant|q|\leqslant Q}\sum_{K<k\leqslant2K}
  & \Big(M^{1/2}\big(|q|K^{-1}R_*\big)^{1/6}+M\big(|q|K^{-1}R_*\big)^{-1/4}\Big)
            \nonumber \\
  &\ll M^{1/2}K^{5/6}Q^{7/6}R^{1/6}_*+MK^{5/4}Q^{3/4}R_*^{-1/4}\ll N.
\end{align*}
This completes the proof of Lemma \ref{S(II)}.
\end{proof}

\begin{lemma}\label{S(I)}
Suppose that $a(m)\ll 1$. Then, for $M\ll N^{1/4}$, we have
\begin{equation*}
S_{I}(M,K)\ll N^{\frac{38+(\gamma_1+\gamma_2)}{43}+\varepsilon}.
\end{equation*}
\end{lemma}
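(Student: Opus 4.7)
The plan is to pass the absolute value inside the $m$-sum using $|a(m)|\ll 1$ and apply Lemma \ref{Li-Zhai-2022} to each inner sum over $k$ separately. For every fixed $m\sim M$, the phase, viewed as a function of $k$,
\[
\phi_m(k):=(\alpha t m)k+(h_1 m^{\gamma_1})k^{\gamma_1}+(h_2 m^{\gamma_2})k^{\gamma_2}
\]
has exactly the shape required by Lemma \ref{Li-Zhai-2022}, with the roles of $(\alpha,a,b,M)$ there played by $(\alpha t m,\,h_1 m^{\gamma_1},\,h_2 m^{\gamma_2},\,K)$; the non-vanishing hypothesis $ab\neq 0$ is automatic since $|h_1|,|h_2|\geqslant 1$.

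The parameter $R$ corresponding to this application is
\[
R_m:=|h_1|m^{\gamma_1}K^{\gamma_1}+|h_2|m^{\gamma_2}K^{\gamma_2}.
\]
Since $m\sim M$, $k\sim K$ and $mk\sim N$ force $MK\asymp N$, and hence $m^{\gamma_i}K^{\gamma_i}=(mK)^{\gamma_i}\asymp N^{\gamma_i}$, I obtain $R_m\asymp R_*$ uniformly in $m$. Lemma \ref{Li-Zhai-2022} therefore gives $\bigl|\sum_{K<k\leqslant 2K}e(\phi_m(k))\bigr|\ll R_*^{1/2}+KR_*^{-1/3}$, and summing trivially over $M<m\leqslant 2M$ yields
\[
S_I(M,K)\ll MR_*^{1/2}+MKR_*^{-1/3}\ll MR_*^{1/2}+NR_*^{-1/3}.
\]

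It then remains to verify that each of these two terms is majorized by $N^{(31+2(\gamma_1+\gamma_2))/38+\varepsilon}$. For the first, I combine the hypothesis $M\ll N^{1/4}$ with the upper bound $R_*\ll N^{(12(\gamma_1+\gamma_2)+15)/38-\varepsilon/3}$ recorded immediately before Lemma \ref{S(II)}, producing an exponent of $(17+6(\gamma_1+\gamma_2))/38$; the required inequality collapses to $\gamma_1+\gamma_2\leqslant 7/2$, which is trivial from $\gamma_1+\gamma_2<2$. For the second, I use the trivial lower bound $R_*\geqslant N^{\gamma_1}$ (coming from $|h_1|\geqslant 1$), giving $NR_*^{-1/3}\ll N^{1-\gamma_1/3}$, so that the required inequality reduces to $44\gamma_1/3+2\gamma_2\geqslant 7$, which holds comfortably from $\gamma_1>1/2$ alone.

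There is no substantive obstacle. The whole estimate rests on the observation that $mK\asymp N$ collapses the $m$-dependence of $R_m$, so the Type I sum reduces to a single appeal to Lemma \ref{Li-Zhai-2022} followed by routine bookkeeping of exponents; both resulting inequalities have considerable slack, reflecting the fact that Type I sums are generally much easier than the Type II estimate in Lemma \ref{S(II)}.
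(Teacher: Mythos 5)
Your proof is correct and follows exactly the paper's route: fix $m$, apply Lemma~\ref{Li-Zhai-2022} to the inner $k$-sum with $(\alpha,a,b)=(\alpha tm,h_1m^{\gamma_1},h_2m^{\gamma_2})$, observe that $R\asymp R_*$ because $mK\asymp N$, and then sum trivially over $m$ to get $MR_*^{1/2}+NR_*^{-1/3}$. The only difference is that you spell out the final exponent arithmetic (comparing $(17+6(\gamma_1+\gamma_2))/38$ and $1-\gamma_1/3$ against the target), which the paper leaves implicit.
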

\begin{proof}
Taking $(\alpha,a,b)=(\alpha tm^2, h_1m^{\gamma_1}, h_2m^{\gamma_2})$ in Lemma \ref{Li-Zhai-2022-improvement}, it is easy to see that $R\asymp|h_1|M^{\gamma_1}K^{\gamma_1}+|h_2|M^{\gamma_2}K^{\gamma_2}
\asymp|h_1|N^{\gamma_1}+|h_2|N^{\gamma_2}=R_*$. Then, by Lemma \ref{Li-Zhai-2022-improvement}, we get
\begin{align*}
S_{I}(M,K)
&\ll\sum_{M<m\leqslant2M}\Bigg|\sum_{K<k\leqslant 2K}e(\alpha tm^2k^2+h_1m^{\gamma_1}k^{\gamma_1}+h_2m^{\gamma_2}k^{\gamma_2})\Bigg|\\
&\ll\sum_{M<m\leqslant2M}\big(K^{1/2}R^{1/6}_*+KR^{-1/4}_*\big)\ll MK^{1/2}R^{1/6}_*+NR^{-1/4}_* \ll N^{\frac{38+(\gamma_1+\gamma_2)}{43}+\varepsilon}.
\end{align*}
This completes the proof of Lemma \ref{S(I)}.
\end{proof}

\begin{lemma}\label{Applying-H-B-identity}
Suppose that $N/2<n\leqslant N$. Then we have
\begin{equation*}
\Gamma^*(N):=\sum_{N/2<n\leqslant N}\Lambda(n)e(\alpha tn^2+h_1n^{\gamma_1}+h_2n^{\gamma_2})\ll N^{\frac{38+(\gamma_1+\gamma_2)}{43}+\frac{7\varepsilon}{6}}.
\end{equation*}
\end{lemma}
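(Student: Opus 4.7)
The plan is to apply Lemma \ref{Heath-Brown-identity} (Heath--Brown's identity) with $k=4$ and $z = N^{1/4}$, which is admissible since $n \leq N \leq 2z^4$. This expresses $\Lambda(n)$ on $(N/2, N]$ as a linear combination of $O(1)$ multilinear terms indexed by $j \in \{1,2,3,4\}$, each a sum over $2j$ variables $n_1, \ldots, n_{2j}$ with $n_1 \cdots n_{2j} = n$, coefficient $(\log n_1)\mu(n_{j+1}) \cdots \mu(n_{2j})$, and the size restriction $n_{j+1}, \ldots, n_{2j} \leq z$. Substituting into $\Gamma^*(N)$ and decomposing each variable dyadically as $n_i \sim N_i$ with $\prod_i N_i \asymp N$ and $N_{j+1}, \ldots, N_{2j} \ll N^{1/4}$, the task reduces to bounding each of $O((\log N)^{8})$ multiple sums by $N^{(31+2(\gamma_1+\gamma_2))/38 + \varepsilon}$. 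The two available tools are Lemma \ref{S(I)}, which handles the Type I case $M \ll N^{1/4}$ with a single free inner variable, and Lemma \ref{S(II)}, which handles the Type II case $K \in [N^{1/4}, N^{1/2}]$ with general bounded coefficients on both sides.

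For each dyadic configuration I choose the partition of the index set $\{1, \ldots, 2j\}$ into outer ($m$) and inner ($k$) groups according to the following dichotomy. If some $i_0 \in \{1, \ldots, j\}$ satisfies $N_{i_0} \geq N^{3/4}$, I take $k = n_{i_0}$ as the inner free variable (removing the $\log n_1$ weight by partial summation when $i_0 = 1$), so that $m = \prod_{i \neq i_0} n_i$ has size $M = N/N_{i_0} \ll N^{1/4}$ and Lemma \ref{S(I)} applies. Otherwise every $N_i$ is $< N^{3/4}$, and I run a greedy procedure: order the $N_i$'s in decreasing size and successively form a product $\prod_{i \in B} N_i$ until it first exceeds $N^{1/4}$. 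If this partial product does not exceed $N^{1/2}$, then $K = \prod_{i \in B} N_i \in [N^{1/4}, N^{1/2}]$ and Lemma \ref{S(II)} is directly applicable. If instead it overshoots $N^{1/2}$, then the last-added factor $N_{i_1}$ must itself be $> N^{1/2}$ (otherwise all previously added, larger factors would already have yielded a product $\geq N^{1/4}$), so $N_{i_1} \in (N^{1/2}, N^{3/4})$; replacing $B$ by $\{1, \ldots, 2j\} \setminus \{i_1\}$ then gives $K = N/N_{i_1} \in (N^{1/4}, N^{1/2})$ and Lemma \ref{S(II)} applies with $m = n_{i_1}$.

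The main obstacle is precisely this combinatorial partitioning: under the Heath--Brown constraint $N_{j+1}, \ldots, N_{2j} \ll N^{1/4}$, the exponents $1/4$, $1/2$, $3/4$ afforded by Lemmas \ref{S(I)} and \ref{S(II)} must dovetail so that every dyadic configuration falls into either the Type I or the Type II regime; the greedy argument above verifies this. A minor technical point is that after grouping variables, the induced outer and inner coefficients are sums over factorizations and are therefore bounded only by divisor functions $\tau_r$, each of which is $\ll N^\varepsilon$; this $\varepsilon$-loss, together with the $(\log N)^8$ from dyadic decomposition and the $\log n_1$ factor surviving from Heath--Brown's identity, is comfortably absorbed into the $7\varepsilon/6$ of the stated exponent.
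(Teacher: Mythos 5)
Your proof follows essentially the same route as the paper: apply Heath--Brown's identity to replace $\Lambda$ by multilinear sums, dyadically decompose, and then classify each dyadic configuration so that it falls either into the Type~I regime ($M\ll N^{1/4}$, Lemma~\ref{S(I)}) or the Type~II regime ($K\in[N^{1/4},N^{1/2}]$, Lemma~\ref{S(II)}), absorbing divisor-function losses into the $\varepsilon$. The only cosmetic difference is that you take $k=4$ (eight variables, Möbius-constrained factors $\leq N^{1/4}$) where the paper takes $k=3$ (six variables, constrained factors $\leq(2N)^{1/3}$), and you phrase the case analysis as a single greedy partitioning rather than the paper's explicit four cases; both work, since in either setting the thresholds $N^{1/4},N^{1/2},N^{3/4}$ cover all possibilities for the same reason.
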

\begin{proof}
By Heath--Brown's identity, i.e., Lemma \ref{Heath-Brown-identity}, with $k=3$, one can see that $\Gamma^*(N)$ can be written as linear combination of $O\big((\log N)^6\big)$ sums, each of which is of the form
\begin{equation}\label{Gamma-(N)}
\Gamma^{\dag}(N):=\sum_{n_1\sim N_1}\!\!\!\cdots\!\!\!\sum_{n_6\sim N_6}(\log n_1)\mu(n_4)\mu(n_5)\mu(n_6)
e\big(\alpha t(n_1\cdots n_6)^2+h_1(n_1\cdots n_6)^{\gamma_1}+h_2(n_1\cdots n_6)^{\gamma_2}\big),
\end{equation}
where $N\ll N_1\cdots N_6\ll N$; $2N_i\leqslant(2N)^{1/3}$, $i=4,5,6$ and some $n_i$ may only take value $1$. Therefore, it is sufficient to give upper bound estimate for each $\Gamma^{\dag}(N)$ defined as in (\ref{Gamma-(N)}). Next, we shall consider four cases.

\noindent
\textbf{Case 1.} If there exists an $N_j$ such that $N_j\geqslant N^{3/4}>N^{1/3}$, then we must have $1\leqslant j\leqslant 3$. Without loss of generality, we postulate that $N_1\gg N^{3/4}$, and take $m=n_2n_3\cdots n_6,\,\,k=n_1$. Trivially, there holds $m\ll N^{1/4}$. Set
\begin{equation*}
a(m)=\sum_{m=n_2n_3\cdots n_6}\mu(n_4)\cdots\mu(n_6)\ll \tau_5(m).
\end{equation*}
Then $\Gamma^{\dag}(X)$ is a sum of the form $S_I(M,K)$. By Lemma \ref{S(I)}, we have
\begin{equation*}
N^{-\frac{\varepsilon}{10}}\cdot\Gamma^{\dag}(N)\ll N^{\frac{38+(\gamma_1+\gamma_2)}{43}+\varepsilon}.
\end{equation*}

\noindent
\textbf{Case 2.}
If there exists an $N_j$ such that $N^{1/4}\leqslant N_j\leqslant N^{1/2}$,
then we take
\begin{equation*}
k=n_j,\quad K=N_j,\qquad m=\prod_{\substack{1\leqslant i\leqslant6\\ i\neq j}}n_i,\quad
M=\prod_{\substack{1\leqslant i\leqslant6\\ i\neq j}}N_i.
\end{equation*}
Thus, $\Gamma^{\dag}(N)$ is a sum of the form $S_{II}(M,K)$ with $N^{1/4}\ll K\ll N^{1/2}$. By Lemma \ref{S(II)}, we have
\begin{equation*}
N^{-\frac{\varepsilon}{10}}\cdot\Gamma^{\dag}(N)\ll N^{\frac{38+(\gamma_1+\gamma_2)}{43}+\varepsilon}.
\end{equation*}

\noindent
\textbf{Case 3.}
If there exists an $N_j$ such that $N^{1/2}< N_j< N^{3/4}$,
then we take
\begin{equation*}
m=n_j,\quad M=N_j,\qquad k=\prod_{\substack{1\leqslant i\leqslant6\\ i\neq j}}n_i,
\quad K=\prod_{\substack{1\leqslant i\leqslant6\\ i\neq j}}N_i.
\end{equation*}
Thus, $\Gamma^{\dag}(N)$ is a sum of the form $S_{II}(M,K)$ with $N^{1/4}\ll K\ll N^{1/2}$. By Lemma \ref{S(II)}, we have
\begin{equation*}
N^{-\frac{\varepsilon}{10}}\cdot\Gamma^{\dag}(N)\ll N^{\frac{38+(\gamma_1+\gamma_2)}{43}+\varepsilon}.
\end{equation*}

\noindent
\textbf{Case 4.} If $N_j< N^{1/4}\,(j=1,2,\dots,6)$, without loss of generality, we postulate that $N_1\geqslant N_2\geqslant \dots\geqslant N_6$. Let $\ell$ be the natural number such that
\begin{equation*}
N_1\cdots N_{\ell-1}< N^{1/4}, \qquad N_1\cdots N_{\ell}\geqslant N^{1/4}.
\end{equation*}
It is easy to check that $2\leqslant \ell\leqslant5$. Then we have
\begin{equation*}
N^{1/4}\leqslant N_1\cdots N_{\ell}=N_1\cdots N_{\ell-1}N_{\ell}< N^{1/4}\cdot N^{1/4}\leqslant N^{1/2}.
\end{equation*}
In this case, we take
\begin{equation*}
m=\prod_{j=\ell+1}^6 n_j,\quad M=\prod_{j=\ell+1}^6 N_j,\qquad k=\prod_{j=1}^{\ell}n_j,\quad K=\prod_{j=1}^{\ell}N_j.
\end{equation*}
Then $\Gamma^{\dag}(N)$ is a sum of the form $S_{II}(M,K)$. By Lemma \ref{S(II)}, we have
\begin{equation*}
N^{-\frac{\varepsilon}{10}}\cdot\Gamma^{\dag}(N)\ll N^{\frac{38+(\gamma_1+\gamma_2)}{43}+\varepsilon}.
\end{equation*}
Combining the above four cases, we derive that
\begin{equation*}
\Gamma^{*}(N)\ll \Gamma^{\dag}(N)\cdot(\log N)^6\ll N^{\frac{38+(\gamma_1+\gamma_2)}{43}+\frac{7\varepsilon}{6}}.
\end{equation*}
This completes the proof of Lemma \ref{Applying-H-B-identity}.
\end{proof}

\section{Proof of Theorem \ref{Theorem}}

Define a periodic function $\mathcal{F}_{\Delta}(\theta)$ with period $1$ such that
\begin{align*}
\mathcal{F}_{\Delta}(\theta)=
\begin{cases}
     0,  & \textrm{if $-1/2\leqslant \theta\leqslant-\Delta$},\\
     1,  & \textrm{if $-\Delta<\theta<\Delta$},\\
     0,  & \textrm{if $\Delta\leqslant \theta\leqslant 1/2$}.
\end{cases}
\end{align*}
\begin{lemma}\label{upper-bound-Upsilon}
Let
\begin{equation*}
\Upsilon(\gamma_1,\gamma_2;N):=\sum_{\substack{p\leqslant N\\p=\lfloor n^{1/\gamma_1}_1\rfloor=\lfloor n^{1/\gamma_2}_2\rfloor}}\big(\mathcal{F}_{\Delta}(\alpha p^2+\beta)-2\Delta\big)\log p
\end{equation*}
with $\Delta=N^{-\frac{14(\gamma_1+\gamma_2)-27}{43}+\varepsilon}$. Then we have
\begin{align*}
\Upsilon(\gamma_1,\gamma_2;N)\ll N^{\frac{29(\gamma_1+\gamma_2)-16}{43}+\frac{5\varepsilon}{6}}.
\end{align*}
\end{lemma}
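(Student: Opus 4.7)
The plan is to reduce $\Upsilon(\gamma_1,\gamma_2;N)$ to trilinear exponential sums over primes of the shape handled by Lemma~\ref{Applying-H-B-identity}, via three simultaneous Fourier expansions. First, the summation over primes is replaced by a $\Lambda$-weighted summation (prime powers contribute $O(N^{1/2}\log N)$) and a dyadic decomposition reduces the task to bounding, uniformly in $N$,
\begin{equation*}
\Upsilon^*(N) := \sum_{N/2 < n \le N} \Lambda(n)\,\mathbf{1}_{n \in \mathcal{N}_{\gamma_1}}\,\mathbf{1}_{n \in \mathcal{N}_{\gamma_2}}\,\bigl(\mathcal{F}_\Delta(\alpha n + \beta) - 2\Delta\bigr).
\end{equation*}

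Using $\psi(x) = x - \lfloor x \rfloor - 1/2$, each Piatetski--Shapiro indicator reads
\begin{equation*}
\mathbf{1}_{n \in \mathcal{N}_{\gamma_i}} = \gamma_i n^{\gamma_i-1} + \psi(-(n+1)^{\gamma_i}) - \psi(-n^{\gamma_i}) + O(n^{\gamma_i-2}),
\end{equation*}
and each sawtooth is then replaced by its finite Fourier series (Lemma~\ref{Finite-Fourier-expansion}) at height $H_i \asymp N^{1-\gamma_i + (12(\gamma_1+\gamma_2)-23)/38 - \varepsilon/3}$, while $\mathcal{F}_\Delta - 2\Delta$ is expanded through a Selberg/Vaaler majorant-minorant into a truncated Fourier series $\sum_{0<|h|\le H} c(h) e(h\theta)$ at height $H \asymp \Delta^{-1}N^{\varepsilon/3}$ with $|c(h)| \ll \min(\Delta, 1/|h|)$. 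Multiplying out splits $\Upsilon^*(N)$ into a density--density main piece $T_0$, two cross pieces $T_1, T_2$ (each carrying one $\psi$), a double-$\psi$ piece $T_{12}$, plus residual errors of the form $\sum_n \Lambda(n)\,g(-n^{\gamma_1}, H_1)\,g(-n^{\gamma_2}, H_2)$ that are bounded via Lemma~\ref{k-min-esti} with $k = 2$.

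After partial summation to peel off surviving factors of $n^{\gamma_i-1}$ (and a shift $n \to n+1$ to align $(n+1)^{\gamma_i}$ with $n^{\gamma_i}$), every piece becomes a weighted triple sum
\begin{equation*}
\sum_{0<|h|\le H}\!\!|c(h)|\!\!\sum_{|h_1|\le H_1}\!\!\frac{1}{|h_1|+1}\!\!\sum_{|h_2|\le H_2}\!\!\frac{1}{|h_2|+1}\!\left|\sum_{N/2<n\le N}\!\!\!\Lambda(n)\,e\bigl(\alpha h n + h_1 n^{\gamma_1} + h_2 n^{\gamma_2}\bigr)\right|,
\end{equation*}
in which one or both of $h_1, h_2$ may vanish. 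By construction, the triples $(h, h_1, h_2)$ sit in the exact ranges imposed in Section~3, so the inner sum is bounded by $N^{(31+2(\gamma_1+\gamma_2))/38 + 7\varepsilon/6}$: via Lemma~\ref{Applying-H-B-identity} when both $h_1, h_2$ are nonzero; via the same Heath--Brown dissection together with Lemmas~\ref{S(II)}--\ref{S(I)} specialised to $h_1 = 0$ or $h_2 = 0$ in the cross pieces; and via Lemma~\ref{Davenport-Chapter25-1980} in the main piece $T_0$, where $h_1 = h_2 = 0$ leaves a pure Weyl sum $\sum \Lambda(n) e(\alpha h n)$. The triple harmonic sum contributes only $(\log N)^{O(1)}$, which is absorbed by the slack $(12(\gamma_1+\gamma_2)-23)/19$ in the exponent provided by the hypothesis $\gamma_1 + \gamma_2 > 23/12$, yielding the claim.

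The most delicate step, I expect, will be the degenerate cross pieces: they fall outside the literal statement of Lemma~\ref{Applying-H-B-identity}, so one must rerun its Heath--Brown dissection carrying only a single sawtooth and verify that Lemmas~\ref{S(II)} and~\ref{S(I)} continue to deliver the same exponent when $h_1 = 0$ or $h_2 = 0$. This works because those proofs use $R_* = |h_1|N^{\gamma_1} + |h_2|N^{\gamma_2}$ only as a lower bound, and $R_* \gg N^{\gamma_2}$ persists in each degenerate situation. The remaining technicalities---partial summation on the $n^{\gamma_i-1}$ factors, index shifts between $n^{\gamma_i}$ and $(n+1)^{\gamma_i}$, Selberg/Vaaler truncation errors for $\mathcal{F}_\Delta$, and summing the $g$-residues via Lemma~\ref{k-min-esti}---are routine.
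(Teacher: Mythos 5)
Your overall decomposition matches the paper's: you split into a density--density piece $T_0$, two cross pieces $T_1,T_2$, a double-sawtooth piece $T_{12}$, and residuals handled via Lemma~\ref{k-min-esti} with $k=2$ — essentially the paper's $\Upsilon_1,\ldots,\Upsilon_4$ and $\Gamma_2$. For $T_{12}$ you correctly invoke Lemma~\ref{Applying-H-B-identity}, and your idea of re-running the Heath--Brown dissection with one sawtooth frozen to handle $T_1,T_2$ (rather than citing Dimitrov as the paper does) is a legitimate alternative; your observation that only the \emph{lower} bound $R_* \gg N^{\gamma_2}$ matters for the $R_*^{-1/3}$ term in Lemmas~\ref{S(I)} and~\ref{S(II)} does survive in the degenerate case.

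However, there is a genuine gap in your treatment of $T_0$. Saying ``via Lemma~\ref{Davenport-Chapter25-1980} \ldots a pure Weyl sum $\sum \Lambda(n) e(\alpha h n)$'' skips the only nontrivial part of this piece. Lemma~\ref{Davenport-Chapter25-1980} returns $\ll (Nr_h^{-1/2}+N^{4/5}+N^{1/2}r_h^{1/2})\log^4 N$ where $r_h$ is the denominator of a rational approximation to $\alpha h$; this is useless unless one can guarantee $r_h$ is neither too small nor too large, \emph{uniformly in $1\le h\le T$}. Arbitrary irrationality of $\alpha$ gives no such control for a fixed $N$. What the paper actually does is condition on an infinite sequence of convergent denominators $q$ of $\alpha$, take $N$ and the truncation height $T$ tied to $q$ (specifically $q=N^{(12-6\gamma_2)/13}$, $T=\lfloor q^{1/2}\rfloor$), and then run a Dirichlet-approximation plus Legendre-type argument to show $q^{1/3}<r_h\le q^2$ for every $h\le T$ (equations \eqref{approximation-a}--\eqref{approximate-at}). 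Only then does Davenport's bound give $\ll Nq^{-1/6}+N^{4/5}+N^{1/2}q$, which is calibrated to land on the target exponent. Your proposal fixes the $\mathcal{F}_\Delta$-truncation height at $H\asymp\Delta^{-1}N^{\varepsilon/3}$, a quantity with no relation to $q$, so the range of $h$ is not the one for which any lower bound $r_h>q^{1/3}$ can be established; and you nowhere invoke a convergent of $\alpha$ at all. Relatedly, the same coupling of $T$ to $q$ is what makes the Karatsuba-type truncation error $\ll N/q + (N+q)/T$ admissible; with an unconstrained $H$ that estimate is also uncontrolled. In short, the rational-approximation scaffolding for the $h_1=h_2=0$ piece is the missing idea, and without it the claimed bound for $T_0$ does not follow.
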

\noindent
From Lemma \ref{upper-bound-Upsilon}, we know that
\begin{equation}\label{asymptotic-F}
\sum_{\substack{p\leqslant N\\p=\lfloor n^{1/\gamma_1}_1\rfloor=\lfloor n^{1/\gamma_2}_2\rfloor}}\mathcal{F}_{\Delta}(\alpha p^2+\beta)\log p
=2\Delta\sum_{\substack{p\leqslant N\\p=\lfloor n^{1/\gamma_1}_1\rfloor=\lfloor n^{1/\gamma_2}_2\rfloor}}\log p+O\Big(N^{\frac{29(\gamma_1+\gamma_2)-16}{43}+\frac{5\varepsilon}{6}}\Big).
\end{equation}
By (\ref{asymptotic-F}), (\ref{2-P-S-PNT}) and the definition of $\Delta$, we deduce that
\begin{equation*}
 \sum_{\substack{p\leqslant N\\p=\lfloor n^{1/\gamma_1}_1\rfloor=\lfloor n^{1/\gamma_2}_2\rfloor}}\mathcal{F}_{\Delta}(\alpha p^2+\beta)\log p\gg
 N^{\frac{29(\gamma_1+\gamma_2)-16}{43}+\varepsilon},
\end{equation*}
which implies the conclusion of Theorem \ref{Theorem} thanks to the definition of $\mathcal{F}_{\Delta}(\theta)$. Therefore, in the rest of this section, we shall prove Lemma \ref{upper-bound-Upsilon}. For $1/2<\gamma<1$, it is easy to see that
\begin{equation*}
\lfloor-p^{\gamma}\rfloor-\lfloor-(p+1)^{\gamma}\rfloor=
\begin{cases}
     1,  & \textrm{if $p=\lfloor n^{1/\gamma}\rfloor$ for some $n\in\mathbb{N}^+$},\\
     0, & \textrm{otherwise},
   \end{cases}
\end{equation*}
and
\begin{equation}\label{Elementary-Formula}
(p+1)^{\gamma}-p^{\gamma}=\gamma p^{\gamma-1}+O(p^{\gamma-2}).
\end{equation}
Thus, we have
\begin{align*}
         \Upsilon(\gamma_1,\gamma_2;N)
= &  \sum_{p\leqslant N}\big(\lfloor-p^{\gamma_1}\rfloor-\lfloor-(p+1)^{\gamma_1}\rfloor\big)
         \big(\lfloor -p^{\gamma_2}\rfloor-\lfloor-(p+1)^{\gamma_2}\rfloor\big)
         \big(\mathcal{F}_{\Delta}(\alpha p^2+\beta)-2\Delta\big)\log p
                 \nonumber \\
= &  \sum_{p\leqslant N}\Bigg(\prod_{i=1}^2\big((p+1)^{\gamma_i}-p^{\gamma_i}+\psi(-(p+1)^{\gamma_i})
         -\psi(-p^{\gamma_i})\big)\Bigg)\big(\mathcal{F}_{\Delta}(\alpha p^2+\beta)-2\Delta\big)\log p
                 \nonumber \\
=: & \,\,\Upsilon_1(\gamma_1,\gamma_2;N)+\Upsilon_2(\gamma_1,\gamma_2;N)+\Upsilon_3(\gamma_1,\gamma_2;N)
          +\Upsilon_4(\gamma_1,\gamma_2;N),
\end{align*}
where
\begin{align*}
&  \Upsilon_1(\gamma_1,\gamma_2;N)=\sum_{p\leqslant N}\big((p+1)^{\gamma_1}-p^{\gamma_1}\big)
   \big((p+1)^{\gamma_2}-p^{\gamma_2}\big)\big(\mathcal{F}_{\Delta}(\alpha p^2+\beta)-2\Delta\big)\log p,
              \nonumber \\
&  \Upsilon_2(\gamma_1,\gamma_2;N)=\sum_{p\leqslant N}\big((p+1)^{\gamma_1}-p^{\gamma_1}\big)
   \big(\psi(-(p+1)^{\gamma_2})-\psi(-p^{\gamma_2})\big)
   \big(\mathcal{F}_{\Delta}(\alpha p^2+\beta)-2\Delta\big)\log p,
              \nonumber \\
&  \Upsilon_3(\gamma_1,\gamma_2;N)=\sum_{p\leqslant N}\big((p+1)^{\gamma_2}-p^{\gamma_2}\big)
   \big(\psi(-(p+1)^{\gamma_1})-\psi(-p^{\gamma_1})\big)
   \big(\mathcal{F}_{\Delta}(\alpha p^2+\beta)-2\Delta\big)\log p,
              \nonumber \\
&  \Upsilon_4(\gamma_1,\gamma_2;N)=\sum_{p\leqslant N}
   \Bigg(\prod_{i=1}^2\big(\psi(-(p+1)^{\gamma_i})-\psi(-p^{\gamma_i})\big)\Bigg)
   \big(\mathcal{F}_{\Delta}(\alpha p^2+\beta)-2\Delta\big)\log p.
\end{align*}
Now, we use the well--known expansion (e.g., see the arguments on page 140 of Vaughan \cite{Vaughan-1977})
\begin{align}\label{Expansion-F-Delta}
\mathcal{F}_{\Delta}(\theta)-2\Delta
& =\sum_{1\leqslant|t|\leqslant T}\frac{\sin2\pi t\Delta}{\pi t}e(t\theta)+O\Bigg(\min\bigg(1,\frac{1}{T\|\theta+\Delta\|}\bigg)
+\min\bigg(1,\frac{1}{T\|\theta-\Delta\|}\bigg)\Bigg)\nonumber\\
& =\mathcal{M}(\theta,T)+O\big(\mathcal{E}(\theta,T)\big),
\end{align}
say. For $\Upsilon_1(\gamma_1,\gamma_2;N)$, by a splitting argument, it suffices to estimate
\begin{equation*}
\Upsilon^*_1(\gamma_1,\gamma_2;N)
:=\sum_{N/2<p\leqslant N}\big((p+1)^{\gamma_1}-p^{\gamma_1}\big)\big((p+1)^{\gamma_2}-p^{\gamma_2}\big)\big(\mathcal{F}_{\Delta}(\alpha p^2+\beta)-2\Delta\big)\log p.
\end{equation*}
Putting (\ref{Expansion-F-Delta}) into the right--hand side of $\Upsilon^*_1(\gamma_1,\gamma_2;N)$, it follows from (\ref{Elementary-Formula}) that
\begin{align}\label{upper-bound-Upsilon-star-1}
& \Upsilon^*_1(\gamma_1,\gamma_2;N)\nonumber\\
= & \sum_{N/2<p\leqslant N}\big(\gamma_1\gamma_2p^{\gamma_1+\gamma_2-2}+O(p^{\gamma_1+\gamma_2-3})\big)
\big(\mathcal{M}({\alpha p^2+\beta},T)+O(\mathcal{E}(\alpha p^2+\beta,T))\big)\log p\nonumber\\
\ll & \sum_{N/2<p\leqslant N}p^{\gamma_1+\gamma_2-2}\cdot\mathcal{M}({\alpha p^2+\beta},T)\log p+\sum_{N/2<p\leqslant N}p^{\gamma_1+\gamma_2-2}\cdot\big|\mathcal{E}(\alpha p^2+\beta,T)\big|\log p.
\end{align}
By noting that $1/2<\gamma_2<\gamma_1<1$ and $27/14<\gamma_1+\gamma_2<2$, one has $\gamma_2>27/14-\gamma_1>27/14-1=13/14$. Taking $T=\lfloor q^{1/2}\rfloor$ and $q=N^{\frac{12-6\gamma_2}{13}}$ with $13/14<\gamma_2<1$, by Lemma \ref{Karatsuba-1993}, the second term on the right--hand side of (\ref{upper-bound-Upsilon-star-1}) can be estimated as
\begin{align}\label{error-term}
\ll & \,\, \log N  \sum_{N/2<n\leqslant N}\Bigg(\min\bigg(1,\frac{1}{T\|\alpha n^2+\beta+\Delta\|}\bigg)
           +\min\bigg(1,\frac{1}{T\|\alpha n^2+\beta-\Delta\|}\bigg)\Bigg)
                       \nonumber\\
\ll & \,\, (NTq)^{\frac{\varepsilon}{10}}\big(Nq^{-1/2}+N^{1/2}+NT^{-1}+T^{-1/2}q^{1/2}\big)\ll  N^{\varepsilon}\big(N q^{-1/2}+N^{1/2}+q^{1/4}\big)                     \nonumber\\
\ll & \,\, \big(N^{\frac{3\gamma_2+7}{13}}+N^{1/2}+N^{\frac{6-3\gamma_2}{26}}\big)N^{\varepsilon}
           \ll N^{\frac{29(\gamma_1+\gamma_2)-16}{43}}.
\end{align}
\noindent
Hence, it is sufficient to show that
\begin{equation*}
\Xi(\gamma_1,\gamma_2,N):=\sum_{N/2<p\leqslant N}(\log p)p^{\gamma_1+\gamma_2-2}\cdot\mathcal{M}(\alpha p^2+\beta,T)\ll N^{\frac{29(\gamma_1+\gamma_2)-16}{43}+\frac{2\varepsilon}{3}}.
\end{equation*}
By partial summation, one has
\begin{align}\label{Xi}
           \Xi(\gamma_1,\gamma_2;N)
  = & \,\, \sum_{N/2<p\leqslant N}p^{\gamma_1+\gamma_2-2}\sum_{1\leqslant|t|\leqslant T}
           \frac{\sin2\pi t\Delta}{\pi t}e(\alpha tp^2+\beta t)\log p
                 \nonumber\\
\ll & \,\, \sum_{1\leqslant|t|\leqslant T}\bigg|\frac{\sin 2\pi t\Delta}{\pi t}\bigg|
           \cdot\Bigg|\sum_{N/2<p\leqslant N}p^{\gamma_1+\gamma_2-2}e(\alpha tp^2)\log p\Bigg|
                 \nonumber\\
\ll & \,\, \sum_{1\leqslant t\leqslant T}\min\bigg(\Delta,\frac{1}{t}\bigg)
           \Bigg|\sum_{N/2<p\leqslant N}p^{\gamma_1+\gamma_2-2}e(\alpha tp^2)\log p\Bigg|
                 \nonumber\\
\ll & \,\, \sum_{1\leqslant t\leqslant T}\min\bigg(\Delta,\frac{1}{t}\bigg)
           \Bigg|\int_{\frac{N}{2}}^{N}u^{\gamma_1+\gamma_2-2}\mathrm{d}
           \Bigg(\sum_{N/2<p\leqslant u}e(\alpha tp^2)\log p\Bigg)\Bigg|
                 \nonumber\\
\ll & \,\, \sum_{1\leqslant t\leqslant T}\min\bigg(\Delta,\frac{1}{t}\bigg)
           \Bigg(N^{\gamma_1+\gamma_2-2}\Bigg|\sum_{N/2<p\leqslant N}e(\alpha tp^2)\log p\Bigg|
                 \nonumber\\
    & \,   \quad+\max_{N/2\leqslant v\leqslant N}\Bigg|\sum_{N/2<p\leqslant v}e(\alpha tp^2)\log p\Bigg|
           \times\int_{\frac{N}{2}}^{N}u^{\gamma_1+\gamma_2-3}\mathrm{d}u\Bigg)
                 \nonumber\\
\ll & \,\, N^{\gamma_1+\gamma_2-2}\cdot
           \sum_{1\leqslant t \leqslant T}\min\bigg(\Delta,\frac{1}{t}\bigg)\cdot
           \max_{N/2\leqslant u\leqslant N}\Bigg|\sum_{N/2<p\leqslant u}e(\alpha tp^2)\log p\Bigg|.
\end{align}
Since $\alpha$ is irrational, there exist infinitely many distinct convergents  $a/q$ to its continued fraction subject to
\begin{equation}\label{approximation-a}
  \bigg|\alpha-\frac{a}{q}\bigg|\leqslant\frac{1}{q^2},\qquad (a,q)=1,\qquad q\geqslant1.
\end{equation}
By Dirichlet's lemma on rational approximation (e.g., see Lemma 2.1 of Vaughan \cite{Vaughan-book}),
for each $1\leqslant t\leqslant T$, there exist integers $b_t$ and $r_t$ such that
\begin{equation}\label{approximation-alpha t}
\left|\alpha t-\frac{b_t}{r_t}\right|\leqslant \frac{1}{r_tq^2},\qquad (b_t,r_t)=1,
\qquad 1\leqslant r_t\leqslant q^2.
\end{equation}
We claim that, uniformly for $1\leqslant t\leqslant T$, there holds
\begin{equation}\label{q-upp-low}
q^{1/3}<r_t\leqslant q^2.
\end{equation}
Otherwise, if $r_t\leqslant q^{1/3}$, one has
\begin{equation}\label{tr-q}
tr_t\leqslant T\cdot q^{1/3}\leqslant q^{1/2}\cdot q^{1/3}= q^{5/6}.
\end{equation}
Now, we shall illustrate
\begin{equation}\label{aq-neq-btr}
\frac{a}{q}\neq\frac{b_t}{tr_t}.
\end{equation}
Otherwise, one has $atr_t=b_tq$. Since $(a,q)=(b_t,r_t)=1$, then $r_t|q$ and $a|b_t$. Set $b_t=a\cdot k_t$, then
$tr_t=k_tq$. It follows from $(b_t,r_t)=1$ that $(k_t,r_t)=1$, which implies $k_t|t$. By setting $t=k_t\cdot\ell_t$, we get $q=\ell_t\cdot r_t\leqslant tr_t$, which contradicts to (\ref{tr-q}). Hence, (\ref{aq-neq-btr}) holds. It follows from (\ref{tr-q}) and (\ref{aq-neq-btr}) that
\begin{equation}\label{approximate-at}
\bigg|\frac{a}{q}-\frac{b_t}{tr_t}\bigg|=\frac{|atr_t-b_tq|}{tr_tq}\geqslant\frac{1}{tr_tq}
\geqslant\frac{1}{q^{11/6}}.
\end{equation}
On the other hand, by (\ref{approximation-a}) and (\ref{approximation-alpha t}), we deduce that
\begin{equation*}
\bigg|\frac{a}{q}-\frac{b_t}{tr_t}\bigg|\leqslant\bigg|\alpha-\frac{a}{q}\bigg|
+\bigg|\alpha-\frac{b_t}{tr_t}\bigg|\leqslant\frac{1}{q^2}+\frac{1}{tr_tq^2}\leqslant \frac{2}{q^2}
=o\bigg(\frac{1}{q^{11/6}}\bigg),
\end{equation*}
which contradicts to (\ref{approximate-at}). By Lemma \ref{Davenport-Chapter25-1980}, the innermost summation over $p$ of (\ref{Xi}) can be
estimated as
\begin{equation*}
\max_{N/2\leqslant u\leqslant N}\Bigg|\sum_{N/2<p\leqslant u}e(\alpha tp^2)\log p\Bigg|\ll N^{1+\varepsilon}\big(r_t^{-1}+N^{-1/2}+r_tN^{-2}\big)^{1/4},
\end{equation*}
which combined with (\ref{q-upp-low}) yields
\begin{align*}
           \Xi(\gamma_1,\gamma_2;N)
\ll & \,\, N^{\gamma_1+\gamma_2-1+\varepsilon}(q^{-1/12}+N^{-1/8}+N^{-1/2}q^{1/2})
           \cdot\sum_{1\leqslant t\leqslant T}\min\Big(\Delta,\frac{1}{t}\Big)
                 \nonumber \\
\ll & \,\, \Big(N^{\gamma_1+\gamma_2-1}\big(N^{\frac{12-6\gamma_2}{13}}\big)^{-1/12}+N^{\gamma_1+\gamma_2-9/8}
           +N^{\gamma_1+\gamma_2-3/2}N^{\frac{6-3\gamma_2}{13}}\Big)N^{\varepsilon}
                 \nonumber \\
\ll & \,\, N^{\frac{29(\gamma_1+\gamma_2)-16}{43}+\frac{2\varepsilon}{3}}.
\end{align*}

By (\ref{Elementary-Formula}), partial summation and the arguments on pp. 9--11 of Dimitrov \cite{Dimitrov-2025}, we know that, for $13/14<\gamma_2<1$, there holds
\begin{align*}
    & \,\, \Upsilon_2(\gamma_1,\gamma_2;N)
                \nonumber \\
\ll & \,\, \sum_{N/2<p\leqslant N}\big(\gamma_1p^{\gamma_1-1}+O(p^{\gamma_1-2})\big)
           \big(\psi(-(p+1)^{\gamma_2})-\psi(p^{\gamma_2})\big)
           (\mathcal{F}_{\Delta}(\alpha p^2+\beta)-2\Delta\big)\log p
                \nonumber \\
\ll & \,\, \sum_{N/2<p\leqslant N}p^{\gamma_1-1}\big(\psi(-(p+1)^{\gamma_2})-\psi(p^{\gamma_2})\big)
           (\mathcal{F}_{\Delta}(\alpha p^2+\beta)-2\Delta\big)\log p+N^{\gamma_1-1}
                \nonumber \\
\ll & \,\, N^{\gamma_1-1}(\log N)\max_{N/2<u\leqslant N}\Bigg|\sum_{N/2<p\leqslant u}
           \big(\psi(-(p+1)^{\gamma_2})-\psi(p^{\gamma_2})\big)
           (\mathcal{F}_{\Delta}(\alpha p^2+\beta)-2\Delta\big)\log p\Bigg|
                \nonumber \\
\ll & \,\, N^{\gamma_1-1}(\log N)\cdot N^{\frac{15\gamma_2+13}{29}+\frac{2\varepsilon}{3}}
           \ll N^{\frac{29\gamma_1+15\gamma_2-16}{29}+\frac{2\varepsilon}{3}}
           \ll N^{\frac{29(\gamma_1+\gamma_2)-16}{43}+\frac{2\varepsilon}{3}}.
\end{align*}
Noting that $1/2<\gamma_2<\gamma_1<1$ and $27/14<\gamma_1+\gamma_2<2$, one has $\gamma_1\in(27/28,1)\subseteq(13/14,1)$. By following the processes exactly the same as those of the upper bound
estimate of $\Upsilon_2(\gamma_1,\gamma_2;N)$, we can also derive that
$\Upsilon_3(\gamma_1,\gamma_2;N)\ll N^{\frac{29(\gamma_1+\gamma_2)-16}{43}+\frac{2\varepsilon}{3}}$.

Next, we focus on the upper bound estimate of $\Upsilon_4(\gamma_1,\gamma_2;N)$. Combining (\ref{Expansion-F-Delta}) and the arguments exactly the same as (\ref{error-term}), we obtain
\begin{align*}
     &   \Upsilon_4(\gamma_1,\gamma_2;N)
                  \nonumber \\
 \ll & \sum_{N/2<p\leqslant N}\Bigg(\prod_{i=1}^2\big(\psi(-(p+1)^{\gamma_i})-\psi(-p^{\gamma_i})\big)\Bigg)
       \cdot \mathcal{M}(\alpha p^2+\beta,T)\log p+N^{\frac{29(\gamma_1+\gamma_2)-16}{43}}
                  \nonumber \\
 \ll & \sum_{1\leqslant|t|\leqslant T}\frac{\sin2\pi t\Delta}{\pi t}e(\beta t)\sum_{N/2<p\leqslant N}
       \Bigg(\prod_{i=1}^2\big(\psi(-(p+1)^{\gamma_i})-\psi(-p^{\gamma_i})\big)\Bigg)e(\alpha tp^2)
       \log p+N^{\frac{29(\gamma_1+\gamma_2)-16}{43}}
                  \nonumber \\
\ll & \sum_{1\leqslant t\leqslant T}\min\bigg(\Delta,\frac{1}{t}\bigg)\Bigg|\sum_{N/2<p\leqslant N}
      \Bigg(\prod_{i=1}^2\big(\psi(-(p+1)^{\gamma_i})-\psi(-p^{\gamma_i})\big)\Bigg)e(\alpha tp^2)\log p\Bigg|
      +N^{\frac{29(\gamma_1+\gamma_2)-16}{43}}.
\end{align*}
Therefore, it is sufficient to show, uniformly for $1\leqslant t\leqslant T$, that
\begin{equation*}
\Gamma(N):=\sum_{N/2<n\leqslant N}
\Lambda(n)\Bigg(\prod_{i=1}^2\big(\psi(-(n+1)^{\gamma_1})-\psi(-n^{\gamma_1})\big)\Bigg) e(\alpha tn^2)
\ll N^{\frac{29(\gamma_1+\gamma_2)-16}{43}+\frac{\varepsilon}{2}}.
\end{equation*}
Let $H_1=N^{1-\gamma_1+\frac{14(\gamma_1+\gamma_2)-27}{43}-\frac{\varepsilon}{3}},\,
H_2=N^{1-\gamma_2+\frac{14(\gamma_1+\gamma_2)-27}{43}-\frac{\varepsilon}{3}}$. From Lemma \ref{Finite-Fourier-expansion},
we have
\begin{equation}\label{psi-psi}
\psi\left(-(n+1)^{\gamma_i}\right)-\psi\left(-n^{\gamma_i}\right)=M_{H_i}(n)+E_{H_i}(n),\qquad (i=1,2),
\end{equation}
where
\begin{align}
& M_{H_i}(n)=-\sum_{1\leqslant |h_i|\leqslant H_i}\frac{e\left(-h_i(n+1)^{\gamma_i}\right)-e\left(-h_in^{\gamma_i}\right)}{2\pi ih_i},  \label{MH(n)} \\
& E_{H_i}(n)=O\bigg(\min\bigg(1,\frac{1}{H_i\|(n+1)^{\gamma_i}\|}\bigg)\bigg)
+O\bigg(\min\bigg(1,\frac{1}{H_i\|n^{\gamma_i}\|}\bigg)\bigg). \label{EH(n)}
\end{align}
Inserting (\ref{psi-psi}) into $\Gamma(N)$, we obtain
\begin{equation*}
\Gamma(N)
=\sum_{N/2<n\leqslant N}\Lambda(n)\big(M_{H_1}(n)+E_{H_1}(n)\big)\big(M_{H_2}(n) +E_{H_2}(n)\big)e(\alpha tn^2)=:\Gamma_1(N)+\Gamma_2(N),
\end{equation*}
where
\begin{align*}
& \Gamma_1(N)=\sum_{N/2<n\leqslant N}\Lambda(n)M_{H_1}(n)M_{H_2}(n)e(\alpha tn^2),\\
& \Gamma_2(N)=\sum_{N/2<n\leqslant N}\Lambda(n)\big(M_{H_1}(n)E_{H_2}(n)+E_{H_1}(n)M_{H_2}(n)
+E_{H_1}(n)E_{H_2}(n)\big)e(\alpha tn^2).
\end{align*}
For each fixed $n\in(N/2,N]$, define
\begin{equation*}
\phi_{n,\gamma}(t)=t^{-1}\left(e\left(t(n+1)^{\gamma}-tn^{\gamma}\right)-1\right),
\qquad S_{n,\gamma}(t)=\sum_{1\leqslant h\leqslant t}e(hn^{\gamma}).
\end{equation*}
It is easy to check that
\begin{align*}
\phi_{n,\gamma}(t)\ll N^{\gamma-1},\qquad\frac{\partial\phi_{n,\gamma}(t)}{\partial t}\ll t^{-1}N^{\gamma-1},
\qquad S_{n,\gamma}(t)\ll \min\left(t,\frac{1}{\|n^{\gamma}\|}\right).
\end{align*}
By partial summation, we obtain
\begin{align}\label{Esti-MH(n)}
           |M_{H_i}(n)|
\ll & \,\, \left|\sum_{1\leqslant |h_i|\leqslant H_i}e\left(h_in^{\gamma_i}\right)\phi_{n,\gamma_i}(h_i)\right|
           \ll\left|\int_{1}^{H_i}\phi_{n,\gamma_i}(t)\mathrm{d}S_{n,\gamma_i}(t)\right|
                \nonumber \\
\ll & \,\, \big|\phi_{n,\gamma_i}(H_i)\big|\big|S_{n,\gamma_i}(H_i)\big|+\int_{1}^{H_i}\big|S_{n,\gamma_i}(t)\big|
           \bigg|\frac{\partial\phi_{n,\gamma_i}(t)}{\partial t}\bigg|\mathrm{d}t
                \nonumber \\
\ll &\,\,  H_iN^{\gamma_i-1}(\log N)\cdot\min\left(1,\frac{1}{H_i\|n^{\gamma_i}\|}\right).
\end{align}
By (\ref{EH(n)}), (\ref{Esti-MH(n)})  and  Lemma \ref{k-min-esti} with $k=2$, we obtain
\begin{align*}
           \Gamma_2(N)
\ll & \,\, (\log N)\sum_{N/2<n\leqslant N}
           \Big(\big|M_{H_1}(n)E_{H_2}(n)\big|+\big|E_{H_1}(n)M_{H_2}(n)\big|+\big|E_{H_1}(n)E_{H_2}(n)\big|\Big)
                   \nonumber \\
\ll & \,\, N^{\frac{14(\gamma_1+\gamma_2)-27}{43}-\frac{\varepsilon}{3}}(\log N)^2
           \times\sup_{(u_1,u_2)\in[0,1]^2}\sum_{N/2<n\leqslant N}
           \prod_{i=1}^2\min\left(1,\frac{1}{H_i\|(n+u_i)^{\gamma_i}\|}\right)
                   \nonumber \\
\ll & \,\, N^{\frac{14(\gamma_1+\gamma_2)-27}{43}-\frac{\varepsilon}{3}}(\log N)^2\left(\frac{N(\log N)^2}{H_1H_2}
           +N^{2/3}(\log N)^2\right)\ll N^{\frac{29(\gamma_1+\gamma_2)-16}{43}+\frac{\varepsilon}{2}}.
\end{align*}
Now, we shall give the upper bound estimate of $\Gamma_1(N)$. Define
\begin{align*}
 \Psi_{h,\gamma}(n)=e\left(h(n+1)^{\gamma}-hn^{\gamma}\right)-1,\qquad
 \Psi(n)=\Psi_{h_1,\gamma_1}(n)\Psi_{h_2,\gamma_2}(n).
\end{align*}
It is easy to check that
\begin{align*}
\Psi(n)\ll |h_1h_2|N^{\gamma_1+\gamma_2-2},\qquad\qquad \frac{\partial\Psi(n)}{\partial n}\ll |h_1h_2|N^{\gamma_1+\gamma_2-3}.
\end{align*}
Accordingly, by partial summation, (\ref{MH(n)}) and Lemma \ref{Applying-H-B-identity}, we derive that
\begin{align*}
          \Gamma_1(N)
 = & \,\, \sum_{N/2<n\leqslant N}\Lambda(n)e(\alpha tn^2)\sum_{1\leqslant|h_1|\leqslant H_1}
          \frac{e(h_1n^{\gamma_1})\Psi_{h_1,\gamma_1}(n)}{2\pi ih_1}\sum_{1\leqslant|h_2|\leqslant H_2}
          \frac{e(h_2n^{\gamma_2})\Psi_{h_2,\gamma_2}(n)}{2\pi ih_2}
                  \nonumber \\
\ll & \,\, \sum_{1\leqslant|h_1|\leqslant H_1}\sum_{1\leqslant|h_2|\leqslant H_2}\frac{1}{|h_1h_2|}
           \Bigg|\sum_{N/2<n\leqslant N}\Lambda(n)e\big(\alpha tn^2+h_1n^{\gamma_1}+h_2n^{\gamma_2}\big)\Psi(n)\Bigg|
                  \nonumber \\
\ll & \,\, \sum_{1\leqslant|h_1|\leqslant H_1}\sum_{1\leqslant|h_2|\leqslant H_2}
           \frac{1}{|h_1h_2|}\Bigg|\int_{\frac{N}{2}}^{N}\Psi(t)\mathrm{d}
           \Bigg(\sum_{N<n\leqslant t}\Lambda(n)e\big(\alpha tn^2+h_1n^{\gamma_1}+h_2n^{\gamma_2}\big)\Bigg)\Bigg|
                  \nonumber \\
\ll & \,\, \sum_{1\leqslant|h_1|\leqslant H_1}\sum_{1\leqslant|h_2|\leqslant H_2}
           \frac{1}{|h_1h_2|}\big|\Psi(N)\big|
           \Bigg|\sum_{N/2<n\leqslant N}\Lambda(n)e\big(\alpha tn^2+h_1n^{\gamma_1}+h_2n^{\gamma_2}\big)\Bigg|
                  \nonumber \\
   & \,\,+\sum_{1\leqslant|h_1|\leqslant H_1}\sum_{1\leqslant|h_2|\leqslant H_2}\frac{1}{|h_1h_2|}
          \int_{\frac{N}{2}}^{N}
     \Bigg|\sum_{N/2<n\leqslant t}\Lambda(n)e\big(\alpha tn^2+h_1n^{\gamma_1}+h_2n^{\gamma_2}\big)\Bigg|
     \bigg|\frac{\partial{\Psi(t)}}{\partial t}\bigg|\mathrm{d}t
                  \nonumber \\
\ll & \,\, N^{\gamma_1+\gamma_2-2}\max_{\substack{1\leqslant|h_1|\leqslant H_1 \\ 1\leqslant|h_2|\leqslant H_2}}
           \max_{N/2<t\leqslant N}\sum_{1\leqslant|h|\leqslant H_1}
           \sum_{1\leqslant|h|\leqslant H_2}\Bigg|\sum_{N/2<n\leqslant t}\Lambda(n)
           e\big(\alpha tn^2+h_1n^{\gamma_1}+h_2n^{\gamma_2}\big)\Bigg|
                  \nonumber \\
\ll & \,\, N^{\gamma_1+\gamma_2-2}\cdot N^{\frac{38+(\gamma_1+\gamma_2)}{43}+\frac{7\varepsilon}{6}}\ll
           N^{\frac{29(\gamma_1+\gamma_2)-16}{43}}.
\end{align*}
This completes the proof of Theorem \ref{Theorem}.

\section*{Acknowledgement}

The authors would like to appreciate the referee for his/her patience in refereeing this paper.
This work is supported by Beijing Natural Science Foundation (Grant No. 1242003), and
the National Natural Science Foundation of China (Grant Nos. 12471009, 12301006, 11901566, 12001047).

\end{document}